\documentclass[twoside]{amsart}
\usepackage{amssymb}
\usepackage{amsmath}
\usepackage{latexsym}

\numberwithin{equation}{section}
\setcounter{page}{1}

\theoremstyle{definition} 
\newtheorem{definition}{Definition}

\newtheorem{remark}{Remark}

\theoremstyle{plain}
\newtheorem{theorem}{Theorem}
\newtheorem{proposition}{Proposition}

\DeclareMathOperator{\End}{\mathrm{End}}

\DeclareMathOperator{\Ad}{Ad}
\DeclareMathOperator{\ACYC}{o^+}

\DeclareMathOperator{\tr}{Tr}

\DeclareMathOperator{\RAMAN}{\Psi}
\DeclareMathOperator{\FQDILOG }{\phi}

\newcommand{\ALG}{{\mathcal{A}_{N}}}

\newcommand{\ASPR}[3]{\prod_{#2\le #1\le #3}}

\newcommand{\CLA}{c_b}
\newcommand{\cla}{c_b}
\newcommand{\COMPLEXS}{\mathbb C}

\newcommand{\DEHN}{\mathsf D}
\newcommand{\DESPR}[3]{\prod_{#3\ge #1\ge #2}}

\newcommand{\ELIM}{\eta}
\newcommand{\EULER}{M}

\newcommand{\FUNCTOR}{\mathsf F}

\newcommand{\GEN}{\mathsf r}
\newcommand{\GENI}{\mathsf g}

\newcommand{\HNSITES}{N}

\newcommand{\IHG}[1]{{\Psi_#1}}
\newcommand{\IMUN}{\mathsf i}
\newcommand{\imun}{\mathsf i}
\newcommand{\INCL}{\kappa}
\newcommand{\INTEGERS}{\mathbb Z}

\newcommand{\la}{b}
\newcommand{\LC}{\mathsf{U}_{lc}}
\newcommand{\LCT}{\tilde{\mathsf{U}}_{lc}}
\newcommand{\lmat}{\mathsf L}

\newcommand{\MCG}{\mathcal M_\Sigma}
\newcommand{\MOM}{\mathsf p}

\newcommand{\NSITES}{{2N}}
\newcommand{\NTR}{2M}

\newcommand{\OBALG}{\mathcal A_{N}}

\newcommand{\PERMUTE}{\mathsf P}
\newcommand{\PGROUP}{\mathbb S}
\newcommand{\POS}{\mathsf q}
\newcommand{\PTOLEMY}{\mathsf T}

\newcommand{\QDILOG}{\varphi_{b}}
\newcommand{\QDILOGI}{\bar\varphi_{b}}
\newcommand{\qmat}{\mathsf Q}
\newcommand{\QPAR}{q}
\newcommand{\QTIL}{\bar q}

\newcommand{\REALS}{\mathbb R}

\newcommand{\ROTATE}{\mathsf A}
\newcommand{\rsmall}{w}

\newcommand{\SDIT}{\Delta_\Sigma}

\newcommand{\sfa}{\mathsf a}

\newcommand{\sfu}{\mathsf u}

\newcommand{\SHFL}{\mathsf G}
\newcommand{\SIM}{\mathsf W}
\newcommand{\SIMF}{\mathsf W_{\mathrm{f}}}
\newcommand{\SIME}{\mathsf W_{\mathrm{o}}}

\newcommand{\SURFACE}{\Sigma}

\newcommand{\transfa}{\mathsf t}
\newcommand{\TRIANGLES}{T}

\newcommand{\vsp}{\mathcal H}

\newcommand{\vv}{\mathsf w}
\newcommand{\ff}{\mathsf r}
\newcommand{\zetai}{\zeta_{\mathrm{inv}}}
\newcommand{\zetao}{\zeta_{\mathrm{o}}}
\newcommand{\gpd}{\mathcal{G}_\Sigma}

\markboth{R.M. Kashaev}{Discrete Liouville equation and Teichm\"uller theory}

\begin{document}

\title{Discrete Liouville equation and Teichm\"uller theory}

\author{R.M. Kashaev}\thanks{
Work partially supported by FNS Grant No.~200020-121675}
\address{Section de math\'ematiques, Universit\'e de Gen\`eve,\\
 2-4 rue du Li\`evre, Case postale 64,
 1211 Gen\`eve 4, Suisse
}
\email{Rinat.Kashaev@unige.ch}
\date{October 2008}
\begin{abstract}
The  relationship (both classical and quantum mechanical) between the discrete Liouville equation and Teichm\"uller theory is reviewed.
\end{abstract}
\maketitle
\tableofcontents
\section{Introduction}

The Liouville equation \cite{liouville} is a partial differential equation of the form
\begin{equation} \label{liouville}
\frac{\partial^2\phi}{\partial z\partial\bar z}=\frac12e^{\phi},
        \end{equation}
        which has a number of applications both in mathematics and mathematical physics. For example, it describes surfaces of constant negative curvature, thus playing
indispensable role in uniformization theory of Riemannian surfaces of negative Euler characteristic \cite{poinc}.
        Indeed, let $p\colon \mathbb{H}\to \Sigma$ be a universal covering map for a hyperbolic surface $\Sigma$, where $\mathbb{H}$ is the upper half plane with the standard Poincar\'e metric $ds^2$, and $\sigma\colon U\to \mathbb{H}$, $U\subset\Sigma$, a local section of $p$. Then, the pull-back metric $\sigma^*ds^2$ in conformal form $e^\phi \vert dz\vert^2$, $z$ being a local complex coordinate on $U$, gives a solution $\phi$ of the Liouville equation on $U$.

  In theoretical physics, the Liouville equation is often considered in analytically continued form with $z=x+t$ and $\bar z=x-t$ as real independent coordinates. In this case, it takes the form of a classical equation of motion for a relativistic $1+1$-dimensional field theoretical system:
       \begin{equation} \label{liouville1}
\frac{\partial^2\phi}{\partial t^2}-\frac{\partial^2\phi}{\partial x^2}=-2e^{\phi}.
        \end{equation}
The invariance with respect to (holomorphic) re-parameterizations, associated with the diffeomorphism group of the circle, make the Liouville equation relevant to two-dimensional gravity \cite{jackiw} and Conformal Field Theory (CFT) \cite{BPZ}. It is also a basic ingredient in the theory of noncritical strings \cite{polyakov}. These are the reasons for the recent interest in the Liouville equation, especially its quantum theory \cite{curtho,dhoker,gernev1,gernev2}, and its (quantum) integrability properties \cite{blz1,blz2,blz3}.

There are few, seemingly different aspects of the relationship between the Liouville equation and Teichm\"uller theory. One such aspect is purely classical, which comes through the above mentioned uniformization theory of surfaces of negative Euler characteristic. It exhibits even more profound features when one considers perturbative approach to quantum Liouville theory \cite{takhtajan,takhtajan1,takhtajan2}.

Another aspect is purely quantum, and it originates from a conjecture of H.~Verlinde \cite{verlinde}, which states that there is a mapping class group equivariant isomorphism between the space of quantum states of the quantum Teichm\"uller theory on a given surface and the space of conformal blocks of the quantum Liouville theory on the same surface, see \cite{teschner} for the up to date situation with the Verlinder conjecture.

One more aspect of the connection of the Liouville equation to Teichm\"uller theory has been considered recently in the works \cite{fkv,fk,fv} through the consideration of a specific \emph{discretized} version of the Liouville equation both on classical and quantum levels. The discrete Liouville equation has the form
\begin{equation}\label{liouville3}
\chi_{m,n-1}\chi_{m,n+1}=(1+\chi_{m-1,n})(1+\chi_{m+1,n}),
\end{equation}
where the discrete "space-time" is represented by integer lattice $\mathbb{Z}^2$ and the dynamical field $\chi_{m,n}$ is a  strictly positive real function on this lattice. To see in what sense this is a discretized version of the Liouville equation, let us take a small positive parameter $\epsilon$ as the lattice spacing of the discretized space-time, and consider the combination
\[
\phi_\epsilon(x,t)=-\log(\epsilon^2\chi_{m,n})
\]
in the limit, where $\epsilon\to 0$, $m,n\to\infty$ in such a way that the products $x=m\epsilon$, and $t=n\epsilon$ are kept fixed. If a solution $\chi_{m,n}$ of the discrete Liouville equation is such that such a limit exists, then the limiting value $\phi_0(x,t)$ solves the dynamical version (\ref{liouville1}) of the Liouville equation.

It is worth repeating here the remark of the paper \cite{fv} that the discrete Liouville equation is among the simplest examples of $Y$-systems \cite{zam}, though it is not a $Y$-system for which is true the Zamolodchikov's periodicity conjecture.

There are also other interesting connections of (quantum) discrete integrable systems with (hyperbolic) geometry, see for example, \cite{bob,bms2,bms1,bs}.

The purpose of this exposition  is, following the works \cite{fkv,fk,fv,k}, to review the relationship of the discrete Liouville equation and the Teichm\"uller theory both classically and quantum mechanically.

{\bf Acknoledgements.} It is a pleasure to thank L.D.~Faddeev and A.Yu.~Volkov  in collaboration with whom some of the results described in this survey were obtained. I would also like to thank 
 V.V.~Bazhanov and V.V.~Mangazeev for valuable comments on the initial version of the paper.

\section{Classical discrete Liouville equation}

\subsection{Discretization from the Liouville formula}

The interpretation of solutions of the Liouville equation in terms of pull-backs of the Poincar\'e metric leads to the Liouville formula for a general solution of the dynamical version (\ref{liouville1}) of the Liouville equation
 \begin{equation}\label{liouville2}
 e^{\phi(x,t)}=-4\frac{f'(u_-)g'(u_+)}{(f(u_-)-g(u_+))^2},\quad u_\pm=x\pm t
 \end{equation}
 where $f(x)$, $g(x)$ are two arbitrary smooth functions on the real line.  From the  structure of the Liouville formula it follows that  it makes sense on entire real plane provided the functions $f(x)$, $g(x)$ satisfy the conditions
 \begin{equation}\label{conditions}
 f'(x)g'(y)<0, \quad f(x)\ne g(y),\quad \forall (x,y)\in \mathbb{R}^2.
 \end{equation}
Despite the fact that in the dynamical version (\ref{liouville1}) of the Liouville equation the complex analytical aspects of the  uniformization of hyperbolic surfaces are somehow lost, there is still an action of the group $PSL(2,\mathbb{R})$ on functions $f(x)$ and $g(x)$ given by transformations
\[
f(x)\mapsto\frac{af(x)+b}{cf(x)+d},\quad g(x)\mapsto\frac{ag(x)+b}{cg(x)+d}
\]
which leave unchanged the Liouville solution (\ref{liouville2}). One can show that the set of $PSL(2,\mathbb{R})$-orbits of pairs $(f,g)$ is in bijection with the set of solutions of the Liouville equation. For example, in the physically interesting case of periodic solutions
\[
\phi(x+L,t)=\phi(x,t)
\]
which correspond to a space-time of the topological type of cylinder $S^1\times \mathbb{R}$, the functions $f(x)$ and $g(x)$ are quasi-periodic:
\begin{equation}\label{quasi}
f(x+L)=\frac{af(x)+b}{cf(x)+d},\quad g(x+L)=\frac{ag(x)+b}{cg(x)+d}
\end{equation}
where the $PSL(2,\mathbb{R})$-matrix
\[
T[\phi]=\begin{pmatrix}
a&b\\
c&d
\end{pmatrix}
\]
is called the \emph{monodromy} matrix associated with a periodic solution $\phi$.

The basic idea of A.Yu.~Volkov behind the  discrete Liouville equation is to write first a $PSL(2,\mathbb{R})$-invariant finite difference analogue  of the Liouville formula~(\ref{liouville2}) by  replacing its right hand side by a cross-ratio of four shifted quantities $f(x-t\pm\epsilon)$, $g(x+t\pm\epsilon)$. Namely, if we define
\begin{equation}\label{volkov}
h_\epsilon(x,t)=-\frac{(f(u_-+\epsilon)-g(u_++\epsilon))(f(u_--\epsilon)-g(u_+-\epsilon))}
{(f(u_-+\epsilon)-f(u_--\epsilon))(g(u_++\epsilon)-g(u_+-\epsilon))},\quad u_{\pm}=x\pm t,
\end{equation}
where we assume conditions~(\ref{conditions}), then it is easily seen that the limit
\[
\ell(x,t)=\lim_{\epsilon\to0}\epsilon^{2}h_\epsilon(x,t)
\]
exists and the formula
\[
e^{\phi(x,t)}=\frac1{\ell(x,t)}
\]
coincides with the Liouville formula~(\ref{liouville2}). On the other hand, one observes that four shifted cross-ratios  $h_\epsilon(x\pm\epsilon,t)$, $h_\epsilon(x,t\pm\epsilon)$ depend on six values $f(x-t+k\epsilon)$, $g(x+t+k\epsilon)$, with $k\in\{0,\pm1\}$. Taking into account the $PSL(2)$-invariance of the cross-ratios, this means that among  those four cross-ratios only three are independent, and one identifies the following relation
\[
h_\epsilon(x,t+\epsilon)h_\epsilon(x,t-\epsilon)=
(1+h_\epsilon(x+\epsilon,t))(1+h_\epsilon(x-\epsilon,t))
\]
which, after the substitutions $x=m\epsilon$ and $t=n\epsilon$ takes the form of the discrete Liouville equation~(\ref{liouville3}) for the function
\begin{equation}\label{volkov1}
\chi_{m,n}=h_\epsilon(m\epsilon,n\epsilon),\quad (m,n)\in\mathbb{Z}^2.
\end{equation}
Notice, that one and the same pair of functions $(f,g)$ is used for constructing solutions of both the Liouville equation and its discrete counterpart.

When the quasi-periodicity conditions~(\ref{quasi}) are satisfied, function~(\ref{volkov}) is periodic:
\[
h_\epsilon(x+L,t)=h_\epsilon(x,t),
\]
so that we have periodic solutions of the discrete Liouville equation as soon as the lattice spacing $\epsilon$ is chosen to be a rational multiple of the period $L$. Namely, for $\epsilon=LM/N$ with positive mutually prime integers $M$ and $N$, function~(\ref{volkov1}) satisfies the equation
\[
\chi_{m+N,n}=\chi_{m,n}.
\]
In particular, when $N=1$, $\chi_{m,n}$ is independent of the first argument, and the discrete Liouville equation becomes a one dimensional discrete equation of the form
\[
\zeta_{n-1}\zeta_{n+1}=(1+\zeta_n)^2.
\]
This latter equation has first appeared in this context in \cite{fv} where it has been interpreted as the evolution of the "zero-modes" of the continuous Liouville equation.

\subsection{Discrete Liouville equation and Teichm\"uller space}

We consider an annulus with $N$ marked points on each of its
boundary components ($2N$ points in total), labeled $A_1,\ldots, A_N$ for one boundary component, and $B_1,\ldots,B_N$, for another.
Additionally, choose an ideal triangulation shown in figure~\ref{fig:1}, where the variables $f_1,\ldots,f_{2N}$ not only serve to identify the interior edges, but  also denote the associated to the triangulation the  Fock coordinates in the Teichm\"uller space of the annulus.
\begin{figure}[htb]
\centering
\begin{picture}(200,60)(0,-10)
\put(0,0){\begin{picture}(120,40)
\multiput(0,0)(0,40){2}{\line(1,0){120}}
\multiput(0,0)(60,0){3}{\line(0,1){40}}
\multiput(0,40)(60,0){2}{\line(3,-2){60}}
\multiput(0,0)(60,0){3}{\circle*{3}}
\multiput(0,40)(60,0){3}{\circle*{3}}
\scriptsize
\put(1,15){$f_1$}
\put(25,15){$f_2$}
\put(61,15){$f_3$}
\put(85,15){$f_4$}
\put(160,15){$f_{2N}$}
\put(201,15){$f_{2N+1}$}

\end{picture}}

\put(140,0){\begin{picture}(60,40)
\multiput(0,0)(0,40){2}{\line(1,0){60}}
\multiput(0,0)(60,0){2}{\line(0,1){40}}
\multiput(0,40)(60,0){1}{\line(3,-2){60}}
\multiput(0,0)(60,0){2}{\circle*{3}}
\multiput(0,40)(60,0){2}{\circle*{3}}
\scriptsize
\end{picture}}

\multiput(125,0)(5,0){3}{\circle*{1}}
\multiput(125,40)(5,0){3}{\circle*{1}}
\scriptsize
\put(-5,-8){$A_1$}
\put(-5,42.5){$B_1$}
\put(55,-8){$A_2$}
\put(55,42.5){$B_2$}
\put(115,-8){$A_3$}
\put(115,42.5){$B_3$}
\put(135,-8){$A_{N}$}
\put(135,42.5){$B_{N}$}
\put(195,-8){$A_{N+1}$}
\put(195,42.5){$B_{N+1}$}
\end{picture}
\caption{An ideally triangulated annulus with $N$ marked
points on each boundary component.
The leftmost and the rightmost vertical
edges are identified with equalities $f_{2N+1}=f_1$, $A_{N+1}=A_1$, $B_{N+1}=B_1$.}\label{fig:1}
\end{figure}
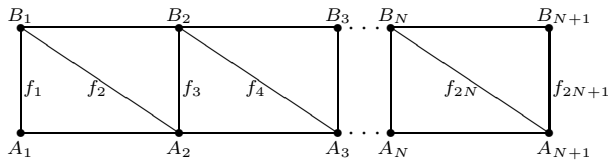
Notice that here we consider an unusual situation where all marked points are located on the boundary of the annulus, and for parametrization of the Teichm\"uller space one does not need to associate coordinates on the boundary ideal arcs. This can be seen by looking at a fundamental domain in the Poincar\'e upper plane which is an ideal polygon with $2N+2$ ideal vertices, see figure~\ref{fig:2} for the case $N=2$.

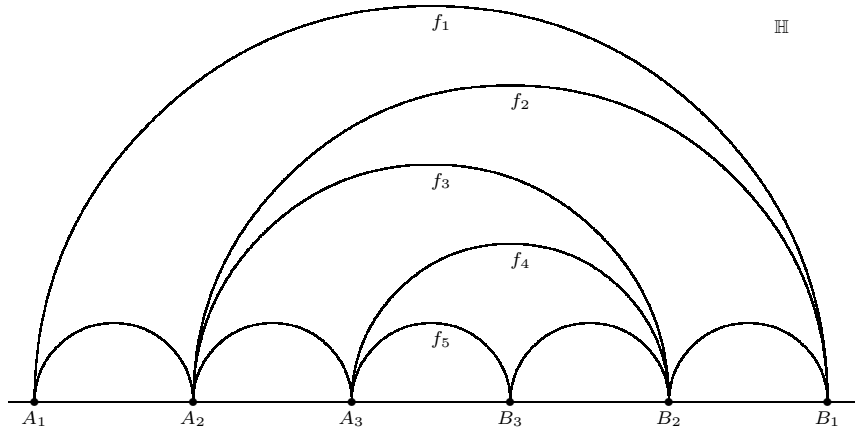
\begin{figure}[htb]
\centering
\begin{picture}(300,160)(0,-10)
\put(-10,0){\line(1,0){320}}
\qbezier(0,0)(0,62.132)(43.934,106.066)
\qbezier(43.934,106.066)(87.868,150)(150,150)
\qbezier(150,150)(212.132, 150)(256.066, 106.066)
\qbezier(256.066,106.066)(300, 62.132)(300,0)

\multiput(0,0)(60,0){5}{\qbezier(0., 0)( 0., 12.4264)( 8.7868, 21.2132)}
\multiput(0,0)(60,0){5}{\qbezier( 8.7868, 21.2132)( 17.5736, 30.)(30., 30.)}
\multiput(0,0)(60,0){5}{\qbezier( 30., 30.)(42.4264, 30.)( 51.2132, 21.2132)}
\multiput(0,0)(60,0){5}{\qbezier( 51.2132, 21.2132)(60., 12.4264)( 60., 0.)}

\qbezier(60., 0.)( 60., 49.7056)( 95.1472, 84.8528)
\qbezier( 95.1472, 84.8528)( 130.294, 120.)( 180., 120.)
\qbezier( 180., 120.)( 229.706, 120.)( 264.853, 84.8528)
\qbezier(264.853, 84.8528)( 300., 49.7056)( 300., 0)

\qbezier(60., 0.)( 60., 37.2792)( 86.3604, 63.6396)
\qbezier( 86.3604, 63.6396)( 112.721, 90.)( 150., 90.)
\qbezier(150., 90.)( 187.279, 90.)( 213.64, 63.6396)
\qbezier( 213.64, 63.6396)( 240., 37.2792)( 240., 0.)


\qbezier(120., 0.)( 120., 24.8528)(137.574, 42.4264)
\qbezier( 137.574, 42.4264)( 155.147, 60.)( 180., 60.)
\qbezier(180., 60.)( 204.853, 60.)( 222.426, 42.4264)
\qbezier( 222.426, 42.4264)( 240., 24.8528)( 240., 0.)
\scriptsize
\put(150,142){$f_1$}
\put(180,112){$f_2$}
\put(150,82){$f_3$}
\put(180,52){$f_4$}
\put(150,22){$f_5$}
\multiput(0,0)(60,0){6}{\circle*{3}}
\put(280,140){$\mathbb{H}$}
\put(-5,-8){$A_1$}
\put(55,-8){$A_2$}
\put(115,-8){$A_3$}
\put(175,-8){$B_3$}
\put(235,-8){$B_2$}
\put(295,-8){$B_1$}
\end{picture}
\caption{A fundamental domain in the Poincar\'e upper half plane for an ideally triangulated annulus with $N=2$ marked
points on each boundary component. Under the covering map $p\colon \mathbb{H}\to \mathbb{H}/\mathbb{Z}$ the images of the biggest circle with label $f_1$ and the middle smallest circle with label $f_5$ coincide so that $p(A_1)=p(A_3)$, $p(B_1)=p(B_3)$.}\label{fig:2}
\end{figure}
Clearly, the isometry class of such a polygon is determined by $2N-1$ real parameters corresponding to positions of $2N+2$ vertices modulo the 3-dimensional isometry group $PSL(2,\mathbb{R})$. This is less than the number of Fock coordinates $f_i$, $ 1\le i\le 2N$, but one more degree of freedom comes from the gluing condition: one chooses a $PSL(2,\mathbb{R})$-matrix restricted by the condition that it should map a given ordered pair of boundary points (the extremities $(A_1,B_1)$ of the half circle labeled $f_1$) to another ordered pair (the extremities $(A_{N+1},B_{N+1})$ of the half circle labeled $f_{2N+1}$), and it is known that there is a one parameter family of such matrices.

The mapping class group of our annulus is given by all homeomorphisms preserving the set of marked points, not necessarily point-wise. We are interested in the unique mapping class, denoted $D^{1/N}$, which fixes the set $\{A_1,\ldots,A_N\}$ point-wise and  cyclically permutes the
 set $\{B_1,\ldots,B_N\}$:
 \[
 B_1\mapsto B_2\mapsto B_3\mapsto\cdots\mapsto B_N\mapsto B_1.
 \]
 As the notation suggests, the $N$-th power of this class is nothing else than the only Dehn twist of the annulus which fixes the boundary point-wise.
 \begin{theorem}[\cite{fk}]
 The discrete dynamical system on the Teichm\"uller space of an annulus with $N$ marked points on each boundary component, corresponding to the mapping class $D^{1/N}$, is described by the discrete Liouville equation~(\ref{liouville3}) on the sublattice $m+n=1\pmod 2$ with the $2N$-periodic boundary condition
 \[
 \chi_{m+2N,n}=\chi_{m,n},
  \]
  the evolution step being identified with the translation along the "light-cone":
  \[
  \chi_{m,n}\mapsto\chi'_{m,n}=\chi_{m-1,n+1}.
  \]
 \end{theorem}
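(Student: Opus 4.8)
The plan is to realize the mapping class $D^{1/N}$ explicitly as a finite sequence of flips (Whitehead moves) of the ideal triangulation of figure~\ref{fig:1}, followed by the edge-relabeling induced by the shift $B_i\mapsto B_{i+1}$, and then to read off the induced transformation of the Fock coordinates. Two standing facts drive the argument. First, the Fock coordinates $f_1,\dots,f_{2N}$ are shear coordinates, so under a flip of an interior edge $e$ lying inside a quadrilateral the coordinate of $e$ is inverted, $f_e\mapsto f_e^{-1}$, while each of the four surrounding edge coordinates is multiplied by a factor of the form $(1+f_e)^{\pm1}$ determined by its position in the quadrilateral; this is precisely the cluster $y$-variable mutation rule. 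Second, the dimension count already recorded in the excerpt gives exactly $2N$ Fock coordinates, and since the discrete Liouville equation is second order in the time $n$, a state of its evolution consists of two consecutive time-slices on the sublattice $m+n\equiv1\pmod2$. With the $2N$-periodicity in $m$ each slice contributes $N$ values, so a state carries $2N$ numbers, matching the Fock coordinates. First I would fix this bijection: the two parity classes of interior edges of the strip are assigned to the two checkerboard classes of the sublattice, and the full tuple $(f_1,\dots,f_{2N})$ is identified with a pair of adjacent rows $(n-1,n)$.

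Next I would decompose $D^{1/N}$ into flips. The fractional twist shifts the whole top boundary by one marked point, which deforms the triangulation into a combinatorially different one of the same annulus; a short argument then exhibits flips returning this to the original picture, the net effect being a cyclic relabeling of the edge variables. I expect $D^{1/N}$ to amount to flipping the $N$ edges of a single parity class (one flip per fundamental cell of the annulus) together with this relabeling, so that a single application advances the evolution by one light-cone step, i.e.\ realizes the map $\chi_{m,n}\mapsto\chi_{m-1,n+1}$ of the statement.

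Then I would compose the flip formulas. Organizing the inversions $f_e\mapsto f_e^{-1}$ together with the $(1+f_e)^{\pm1}$ factors contributed to the neighbouring (opposite-parity) edges, and expressing the outcome at a fixed spatial site in terms of the data one step earlier and later, the composition collapses into the bipartite $Y$-system recurrence
\begin{equation*}
\chi_{m,n+1}\,\chi_{m,n-1}=(1+\chi_{m-1,n})(1+\chi_{m+1,n}),
\end{equation*}
which is exactly the discrete Liouville equation~(\ref{liouville3}). The two structural constraints then fall out: the periodicity $\chi_{m+2N,n}=\chi_{m,n}$ is forced by the gluing $f_{2N+1}=f_1$ of figure~\ref{fig:1}, i.e.\ by assembling the annulus from a strip of $2N$ edges; and the parity restriction $m+n\equiv1\pmod2$ reflects that a flip interchanges the two edge-types, so advancing $n$ by one switches the parity class carrying the coordinates, exactly as the checkerboard sublattice demands. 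As a consistency check one verifies that $\bigl(D^{1/N}\bigr)^N$ reproduces the ordinary Dehn twist, since $N$ light-cone steps amount to a full revolution of the top boundary around the annulus.

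The main obstacle will be the second step together with the collapse in the third: pinning down the exact flip sequence realizing $D^{1/N}$, correctly tracking the induced relabeling, and then verifying that the composition of the shear-coordinate mutation formulas simplifies --- after all the cancellations among the $(1+f_e^{\pm1})$ factors --- to the clean three-term relation rather than to a more complicated rational expression. Getting the conventions right here (which edges are inverted, and the precise placement of the $(1+f_e)$ versus $(1+f_e^{-1})^{-1}$ factors) is exactly what determines whether one lands on~(\ref{liouville3}) with the stated sublattice and light-cone evolution rather than on an equivalent but differently parameterized system.
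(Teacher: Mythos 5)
Your proposal is correct and follows essentially the same route as the paper: there $D^{1/N}$ is realized exactly as you predict---the twist relabeling followed by one flip per fundamental cell (figure~\ref{fig:4})---and the Fock-coordinate flip rule~(\ref{flip}) collapses to $f'_{2j}=1/f_{2j-1}$, $f'_{2j+1}=f_{2j}(1+f_{2j-1})(1+f_{2j+1})$. The convention issue you flag as the main obstacle is settled in the paper by the zig-zag identification $f_m=\chi_{m,0}$ for odd $m$ and $f_m=1/\chi_{m,-1}$ for even $m$, after which these formulas are literally the light-cone shift $\chi_{m,n}\mapsto\chi_{m-1,n+1}$ on the sublattice $m+n=1\pmod 2$.
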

 \begin{proof}
 Recall that under a flip the Fock coordinates transform according to the formulas:
 \begin{equation}\label{flip}
 a'=a/(1+1/e),\  d'=d/(1+1/e),\  b'=b(1+e),\ c'=c(1+e),\ e'=1/e,
 \end{equation}
 where the variables are shown in Fig.~\ref{fig:3}, and all other variables staying unchanged.
 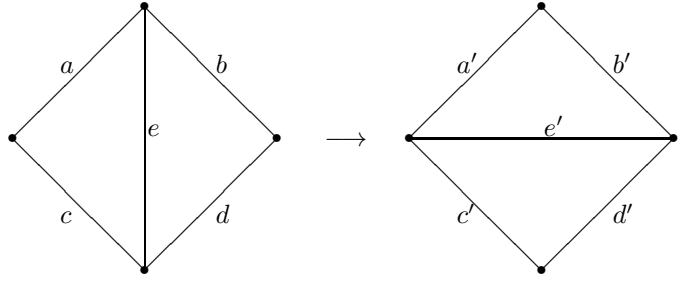
\begin{figure}[htb]
 \centering
 \begin{picture}(250,100)
 \multiput(0,0)(150,0){2}{\begin{picture}(100,100)
 \multiput(0,50)(50,-50){2}{\line(1,1){50}}
 \multiput(50,0)(50,50){2}{\line(-1,1){50}}
 \multiput(50,0)(50,50){2}{\circle*{3}}
 \multiput(0,50)(50,50){2}{\circle*{3}}\end{picture}}
 \put(50,0){\line(0,1){100}}
 \put(150,50){\line(1,0){100}}

 \put(118,47){$\longrightarrow$}

 \put(18,75){$a$}
 \put(77,75){$b$}
 \put(18,18){$c$}
 \put(77,18){$d$}
 \put(51,51){$e$}

 \put(168,75){$a'$}
 \put(227,75){$b'$}
 \put(168,18){$c'$}
 \put(227,18){$d'$}
 \put(201,51){$e'$}

 \end{picture}
 \caption{A flip transformation corresponding to equations~(\ref{flip}).}\label{fig:3}
 \end{figure}
We remark that this transformation law still applies even if some of the sides of the quadrilateral are a part of the boundary. The only modification is that there is no coordinate, associated to a boundary edge, and thus there is nothing to be transformed on this edge.

Now, from figure~\ref{fig:4} and the transformation law~(\ref{flip}) it follows that the mapping class $D^{1/N}$ acts in the Teichm\"uller space according to the following formulas
\begin{equation}\label{light-cone}
f_{2j}\mapsto f'_{2j}=1/f_{2j-1},\quad f_{2j+1}\mapsto f'_{2j+1}=f_{2j}(1+f_{2j-1})(1+f_{2j+1}).
\end{equation}
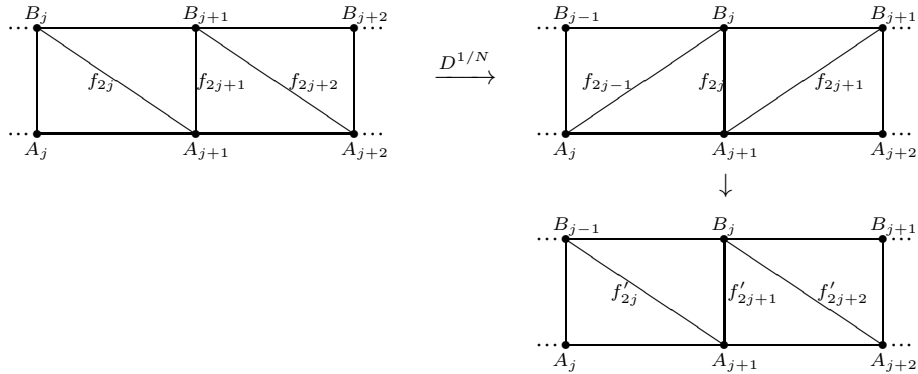
\begin{figure}[htb]
\centering
\begin{picture}(320,140)(0,-10)
\put(0,80){\begin{picture}(120,40)
\multiput(0,0)(0,40){2}{\line(1,0){120}}
\multiput(0,0)(60,0){3}{\line(0,1){40}}
\multiput(0,40)(60,0){2}{\line(3,-2){60}}
\multiput(0,0)(60,0){3}{\circle*{3}}
\multiput(0,40)(60,0){3}{\circle*{3}}
\scriptsize
\multiput(-4,0)(-3,0){3}{\circle*{1}}
\multiput(-4,40)(-3,0){3}{\circle*{1}}
\multiput(124,0)(3,0){3}{\circle*{1}}
\multiput(124,40)(3,0){3}{\circle*{1}}
\put(19,18){$f_{2j}$}
\put(60,18){$f_{2j+1}$}
\put(95,18){$f_{2j+2}$}
\put(-5,-8){$A_{j}$}
\put(-5,43.5){$B_{j}$}
\put(55,-8){$A_{j+1}$}
\put(55,43.5){$B_{j+1}$}
\put(115,-8){$A_{j+2}$}
\put(115,43.5){$B_{j+2}$}
\end{picture}}
\put(200,80){\begin{picture}(120,40)
\multiput(0,0)(0,40){2}{\line(1,0){120}}
\multiput(0,0)(60,0){3}{\line(0,1){40}}
\multiput(0,0)(60,0){2}{\line(3,2){60}}
\multiput(0,0)(60,0){3}{\circle*{3}}
\multiput(0,40)(60,0){3}{\circle*{3}}
\scriptsize
\multiput(-4,0)(-3,0){3}{\circle*{1}}
\multiput(-4,40)(-3,0){3}{\circle*{1}}
\multiput(124,0)(3,0){3}{\circle*{1}}
\multiput(124,40)(3,0){3}{\circle*{1}}
\put(6,18){$f_{2j-1}$}
\put(50,18){$f_{2j}$}
\put(94,18){$f_{2j+1}$}
\put(-5,-8){$A_{j}$}
\put(-5,43.5){$B_{j-1}$}
\put(55,-8){$A_{j+1}$}
\put(55,43.5){$B_{j}$}
\put(115,-8){$A_{j+2}$}
\put(115,43.5){$B_{j+1}$}
\end{picture}}
\put(200,0){\begin{picture}(120,40)
\multiput(0,0)(0,40){2}{\line(1,0){120}}
\multiput(0,0)(60,0){3}{\line(0,1){40}}
\multiput(0,40)(60,0){2}{\line(3,-2){60}}
\multiput(0,0)(60,0){3}{\circle*{3}}
\multiput(0,40)(60,0){3}{\circle*{3}}
\scriptsize
\multiput(-4,0)(-3,0){3}{\circle*{1}}
\multiput(-4,40)(-3,0){3}{\circle*{1}}
\multiput(124,0)(3,0){3}{\circle*{1}}
\multiput(124,40)(3,0){3}{\circle*{1}}
\put(17,18){$f'_{2j}$}
\put(61,18){$f'_{2j+1}$}
\put(95,18){$f'_{2j+2}$}
\put(-5,-8){$A_{j}$}
\put(-5,43.5){$B_{j-1}$}
\put(55,-8){$A_{j+1}$}
\put(55,43.5){$B_{j}$}
\put(115,-8){$A_{j+2}$}
\put(115,43.5){$B_{j+1}$}
\end{picture}}
\put(150,90){$\stackrel{D^{1/N}}{\overrightarrow{\phantom{D^{1/N}}}}$}

\put(258,58){$\downarrow$}
\end{picture}
\caption{The action of the mapping class $D^{1/N}$ on the triangulated
annulus: it is identical on the bottom boundary and a cyclic shift to the right by one spacing on the top boundary.}\label{fig:4}
\end{figure}
If we identify the variables $f_1,\ldots,f_{2N}$ with the initial data for the $2N$-periodic discrete Liouville equation~(\ref{liouville3}) on the sublattice $m+n=1\pmod 2$ along the zig-zag line $n\in\{-1,0\}$ according to formulas
\[
f_m=\left\{\begin{array}{cl}
\chi_{m,0}&\mathrm{if}\ m=1\pmod2;\\
1/\chi_{m,-1}&\mathrm{otherwise},
\end{array}
\right.
\]
then, the transformation formulas~(\ref{light-cone}) exactly correspond the the light-cone evolution:
\[
\chi_{m,n}\mapsto\chi'_{m,n}=\chi_{m-1,n+1}
\]
for the time instants $n\in\{-1,0\}$.
 \end{proof}

\section{Quantum theory}
In what follows we describe quantum discrete Liouville equation and its connection with quantum Teichm\"uller theory. Quantum theory of both the Teichm\"uller space and the discrete Liouville equation is build up on the base of a particular special function, called \emph{non-compact
quantum dilogarithm}. It is a building block of the operators realizing elements of the mapping class group in quantum Teichm\"uller theory, while in the case of quantum Liouville equation it is used for construction of the evolution operator and the Baxter's $Q$-operator (the operator reflecting the integrable structure of the discrete Liouville equation). This is why we start by describing some of the properties of this function.

\subsection{The non-compact quantum dilogarithm}

Let complex $\la$ have a nonzero real part $\Re \la\ne0$. Denote
\[
\CLA\equiv\IMUN(\la+\la^{-1})/2.
\]
The non-compact quantum dilogarithm, $\QDILOG(z)$, $z\in\COMPLEXS$, $|\Im z|<|\Im\CLA|$,
is defined by the formula
\begin{equation}\label{ncqdl}
\QDILOG(z)\equiv\exp\left(-\frac{1}{4}
\int_{-\infty}^{+\infty}
\frac{e^{-2\IMUN z x}\, dx}{\sinh(x\la)
\sinh(x/\la) x}\right),
\end{equation}
where the singularity at $x=0$ is put below the contour
of integration.
This definition implies that $\QDILOG(z)$ is unchanged under
substitutions $\la\to\la^{-1}$, $\la\to-\la$. Using this symmetry,
we choose $\la$ to lay in the first quadrant of the complex plane, namely
\[
\Re\la>0,\quad \Im\la\ge 0,
\]
which implies that $\Im\CLA>0$.
\begin{remark}
This function is closely related to double sine function of Barnes~\cite{barnes}. In the context of quantum integrable systems and quantum groups L.D.~Faddeev in \cite{faddeev95} pointed out its remarkable properties.
\end{remark}

\subsubsection{Functional relations}
In what follows, we shall use the following notation:
\begin{equation}\label{eq:zetas}
\zetai=e^{\IMUN\pi(1+2c_b^2)/6}=e^{\IMUN\pi c_b^2}\zetao^2,\quad
\zetao=e^{\IMUN\pi(1-4c_b^2)/12}.
\end{equation}
Function (\ref{ncqdl}) satisfies the `inversion' relation
\begin{equation}\label{inversion}
\QDILOG(z)\QDILOG(-z)=e^{-\IMUN\pi z^2}\zetai,
\end{equation}
and a pair of functional equations
\begin{equation}\label{shift}
\QDILOG(z+\IMUN\la^{\pm1}/2)=(1+e^{2\pi z\la^{\pm1} })
\QDILOG(z-\IMUN\la^{\pm1}/2).
\end{equation}
The latter equations enable us to extend definition of $\QDILOG(z)$ to the entire
complex plane.

When $\la$ is real or a pure phase,
function $\QDILOG(z)$ is {\em unitary} in the sense that
\begin{equation}\label{qdlunitarity}
(\QDILOG(z))^*=1/\QDILOG(z^*),\quad (1-|\la|)\Im\la=0.
\end{equation}
If self-adjoint operators $\MOM$ and $\POS$ in $L^2(\REALS)$ satisfy the
Heisenberg commutation relation
\begin{equation}\label{heisen}
\MOM\POS-\POS\MOM=(2\pi \IMUN)^{-1},
\end{equation}
then the following operator five term identity holds:
\begin{equation}\label{pent}
\QDILOG(\POS)\QDILOG(\MOM)=\QDILOG(\MOM)\QDILOG(\MOM+\POS)
\QDILOG(\POS).
\end{equation}
For real $\la$ this can be proved  in the $C^*$-algebraic framework \cite{wor}.
See \cite{fkv} for the proof in the case of complex $\la$ by the
use of the integral Ramanujan identity.

\subsubsection{Analytic properties}

Let $\Im\la^2>0$. We can perform the integration in (\ref{ncqdl})
by the residue method. The result can be written in the form
\begin{equation}\label{ratio}
\QDILOG(z)=(e^{2\pi (z-\CLA)\la^{-1}};\QTIL^2)_\infty/
(e^{2\pi (z+\CLA)\la};\QPAR^2)_\infty,
\end{equation}
where
\[
\QPAR=e^{\IMUN\pi\la^2},\quad \QTIL=e^{-\IMUN\pi\la^{-2}},
\]
and
\[
(x;y)_\infty\equiv\prod_{j=0}^\infty(1-xy^j),\quad x,y\in\COMPLEXS,\quad |y|<1.
\]
Formula~(\ref{ratio}) defines a meromorphic function on the entire complex
plane, satisfying functional equations (\ref{inversion}) and (\ref{shift}),
with essential singularity at infinity.
So, it is the desired extension of definition (\ref{ncqdl}).
It is easy to read off location of its poles and
zeroes:
\[
\mathrm{zeroes\ of\ }(\QDILOG(z))^{\pm1}=
\{\pm(\CLA+m\IMUN\la+n\IMUN\la^{-1}):\ m,n\in\INTEGERS_{\ge0}\}.
\]
The behavior at infinity depends on the direction along which the limit is
taken:
\begin{equation}\label{asymp}
\QDILOG(z)\bigg\vert_{|z|\to\infty} \approx\left\{
\begin{array}{ll}
1&|\arg(z)|>\frac{\pi}{2}+\arg(\la);\\
e^{-\IMUN\pi z^2}\zetai&|\arg(z)|<\frac{\pi}{2}-\arg(\la);\\
\Theta(\IMUN\la^{-1}z;-\la^{-2})/(\bar q^2;\bar q^2)_\infty&
|\arg z-\pi/2|<\arg\la;\\
(q^2; q^2)_\infty/\Theta(\IMUN\la z;\la^{2})&
|\arg z+\pi/2|<\arg\la,
\end{array}\right.
\end{equation}
where the standard notation for the $\Theta$-function is used:
\[
\Theta(z;\tau)\equiv\sum_{n\in\INTEGERS}e^{\IMUN\pi\tau n^2+2\pi\IMUN nz},
\quad\Im\tau>0.
\]
Thus, for complex $\la$, double quasi-periodic
$\theta$-functions, generators of the field of meromorphic functions on
complex tori, describe the asymptotic behavior of the non-compact quantum dilogarithm.

\subsubsection{Integral Ramanujan identity}

Consider the following Fourier integral:
\begin{equation}\label{ramanint}
\RAMAN(u,v,w)\equiv
\int_{\REALS}\frac{\QDILOG(x+u)}{\QDILOG(x+v)}e^{2\pi\IMUN wx}\, dx,
\end{equation}
where
\begin{equation}\label{restrictions1}
\Im(u+\CLA)>0,\quad\Im(-v+\CLA)>0, \quad \Im(u-v)<\Im w<0.
\end{equation}
Restrictions (\ref{restrictions1}) actually
can be considerably relaxed by deforming the
integration
path in the complex $x$ plane, keeping the asymptotic directions
of the two ends
within the sectors $\pm(|\arg x|-\pi/2)>\arg\la$. So, the enlarged in this way
 domain for the variables $u,v,w$ has the form:
\begin{equation}\label{restrictions2}
|\arg (\IMUN z)|<\pi-\arg\la,\quad z\in\{w,u-v-w,v-u-2\CLA\}.
\end{equation}
Integral (\ref{ramanint}) can be evaluated explicitly
by the residue method, the result being
\begin{gather}\label{ramanres1}
\RAMAN(u,v,w)=
\frac{\QDILOG(u-v+\CLA)\QDILOG(-w-\CLA)}{\QDILOG(u-v-w+\CLA)}
e^{-2\pi\IMUN w(v-\CLA)}\zetao^{-1}\\\label{ramanres2}
=\frac{\QDILOG(v+w-u-\CLA)}{\QDILOG(v-u-\CLA)\QDILOG(w+\CLA)}
e^{-2\pi\IMUN w(u+\CLA)}\zetao,
\end{gather}
where the two expressions in the right hand side are related to each other
through the inversion relation (\ref{inversion}). In \cite{fkv}
this identity has been demonstrated to be an integral counterpart of
the Ramanujan ${}_1\!\psi_1$ summation formula.

\subsubsection{Fourier transformations}

Particular values of $\RAMAN(u,v,w)$ lead to the following Fourier
transformation formulas:
\begin{multline}\label{fourier1}
\lefteqn{\FQDILOG_+(w)\equiv
\int_{\REALS}\QDILOG(x)e^{2\pi \IMUN wx}\,dx
=\RAMAN(0,v,w)\vert_{v\to-\infty}}\\
=e^{-2\pi\IMUN w\CLA}\zetao/
\QDILOG(w+\CLA)=
e^{\IMUN\pi w^2}\zetao^{-1}\QDILOG(-w-\CLA),\quad\quad
\end{multline}
and
\begin{multline}\label{fourier2}
\lefteqn{\FQDILOG_-(w)\equiv
\int_{\REALS}(\QDILOG(x))^{-1}e^{2\pi \IMUN wx}\,dx
=\RAMAN(u,0,w)\vert_{u\to-\infty}}\\
=e^{2\pi \IMUN w\CLA}\zetao^{-1}
\QDILOG(-w-\CLA)=e^{-\IMUN\pi w^2}\zetao/
\QDILOG(w+\CLA).\quad\quad
\end{multline}
The corresponding inverse transformations read:
\begin{equation}\label{finv}
(\QDILOG(x))^{\pm1}=\int_{\REALS}\FQDILOG_\pm(y)e^{-2\pi\IMUN xy}dy,
\end{equation}
where the pole at $y=0$ is surrounded from below.
\subsubsection{Other integral identities}

The non-compact quantum dilogarithm satisfies also integral analogs of other basic hypergeometric identities, see
for example  \cite{GR}.
For any $n\ge 1$ define
\begin{equation}\label{eq:ihg}
\IHG{n}(a_1,\ldots,a_n;b_1,\ldots,b_{n-1};w)
\equiv\int_\REALS dx\,
e^{\IMUN2\pi x(w-\cla)}\prod_{j=1}^n
\frac{\QDILOG(x+b_j-\cla)}{\QDILOG(x+a_j)},
\end{equation}
where $b_n=\IMUN0$,
\[
\Im(b_j)>0,\quad \Im(\cla-a_j)>0,\quad \sum_{j=1}^n\Im(b_j-a_j-\cla)<
\Im(w-\cla)<0.
\]
The integral analog of the  $_1\psi_1$-summation formula of Ramanujan in this notation takes the form
\begin{equation}\label{eq:raman}
\IHG{1}(a;w)=\zeta_o
\frac{\QDILOG(a+w-\cla)}{\QDILOG(a)
\QDILOG(w)}.
\end{equation}
Equivalently, we can rewrite it as follows
\begin{multline}\label{eq:ramanbar}
\bar\Psi_1(a;w)\equiv\int_\REALS\frac{\QDILOG(x+a)}{\QDILOG(x+\cla-\IMUN0)}
e^{-\IMUN2\pi x(w+\cla)}dx\\
=e^{\IMUN2\pi(a+\cla)(w+\cla)}\IHG{1}(-a;-w)
=\zeta_o^{-1}
\frac{\QDILOG(a)\QDILOG(w)}{\QDILOG(a+w+\cla)}.
\end{multline}
By using the integral Ramunujan formula, one can obtain an integral analog of the Heine transformation formula of the $_1\phi_2$ basic hypergeometric series:
\[
\IHG{2}(a,b;c;w)=\frac{\QDILOG(c-b)}{\QDILOG(a)}\,
\IHG{2}(c-b,w;a+w;b).
\]
By using the evident symmetry
\[
\IHG{2}(a,b;c;w)=\IHG{2}(b,a;c;w),
\]
we come to an integral analog of the Euler--Heine transformation formula
\begin{multline}\label{eq:eu-he}
\IHG{2}(a,b;c;w)\\
=\frac{\QDILOG(c-b)\QDILOG(c-a)\QDILOG(a+b+w-c)}{\QDILOG(a)\QDILOG(b)\QDILOG(w)}\,
\IHG{2}(c-a,c-b;c;a+b+w-c).
\end{multline}
Performing the Fourier transformation on the variable $w$ and using the equation~(\ref{eq:raman}), we obtain an integral analog of the summation formula of Saalsch\"utz:
\begin{multline}\label{eq:saal}
\IHG{3}(a,b,c;d,a+b+c-d-\cla;-\cla)
=\zeta_o^3
e^{\IMUN\pi d(2\cla-d)}\\
\times\frac{\QDILOG(a+b-d-\cla)\QDILOG(b+c-d-\cla)
\QDILOG(c+a-d-\cla)}{\QDILOG(a)\QDILOG(b)\QDILOG(c)
\QDILOG(a-d)\QDILOG(b-d)\QDILOG(c-d)}.
\end{multline}
One special case of this formula is obtained by taking the limit $c\to-\infty$:
\begin{equation}\label{eq:saal1}
\IHG{2}(a,b;d;-\cla)=\zeta_o^3
e^{\IMUN\pi d(2\cla-d)}\frac{\QDILOG(a+b-d-\cla)}{\QDILOG(a)\QDILOG(b)
\QDILOG(a-d)\QDILOG(b-d)}.
\end{equation}
From equation~(\ref{eq:saal}) one can derive an integral or non-compact analog of the Bailey lemma, which is equivalent to the following: if operators $\MOM$ and $\POS$
satisfy the Heisenberg commutation relation~(\ref{heisen}), then the operator valued function
\begin{multline}\label{eq:quv}
\qmat(u,v)=Q(u,v;\MOM,\POS)\\
\equiv e^{\IMUN\pi\POS^2}\QDILOG(u-\POS)\QDILOG(v-\POS)
\frac{\QDILOG(\MOM+u+v)}{\QDILOG(\MOM)}
\QDILOG(u+\POS)\QDILOG(v+\POS)e^{\IMUN\pi\POS^2}
\end{multline}
gives a commuting operator family in variables $u$ and $v$, and it acts diagonally on a one-parameter family
of vectors
\begin{equation}\label{eq:qeig}
\qmat(u,v)|\alpha_s\rangle=|\alpha_s\rangle \QDILOG(u+s)\QDILOG(v+s)
\QDILOG(u-s)\QDILOG(v-s)e^{\IMUN2\pi s^2},
\end{equation}
where the vectors $|\alpha_s\rangle$ are defined by their matrix elements
\begin{equation}
  \label{eq:al-s}
 \langle x|\alpha_s\rangle=\frac{\QDILOG(s-x-\cla+\IMUN 0)}{\QDILOG(s+x+\cla-\IMUN
  0)}e^{-\IMUN 2\pi(x+\cla)s},
\quad s\in\REALS_{\ge0},
\end{equation}
  with respect to the "position" basis
 $\langle x|$, $x\in\REALS$, where the operator $\POS$ is diagonal, and $\MOM$ acts as a differentiation:
\[
\langle x| \POS =x\langle x|,\quad\langle x| \MOM
=\frac1{2\pi\IMUN}\frac\partial{\partial x}\langle x|.
\]
Some of these and other interesting properties of the non-compact quantum dilogarithm as well as their interpretation in the context of integrable systems are also described in \cite{volkov1, bms1}.

\subsection{Quantum discrete Liouville equation}

The quantum version of the equation~(\ref{liouville3}) (with $\NSITES$-periodic boundary conditions) reads as
\begin{equation}\label{liouv}
\vv_{m,t+1}\vv_{m,t-1}=(1+q\vv_{m+1,t})(1+q^{1+2\delta_{\HNSITES,1}}\vv_{m-1,t}),
\end{equation}
where the field variables $\vv_{m,t}$
 are elements of the  \emph{observable algebra} (see below), satisfying the periodicity condition
 \[
 \vv_{m+\NSITES,t}=\vv_{m,t},
 \]
 and
$q=\exp(\IMUN\pi\la^2)$,
$\la$ being the {\em coupling} constant (or square root thereof). The latter
is expected to be related with the Virasoro central charge through the formula
\[
c_{\mathrm{Vir}}=1+6(\la+\la^{-1})^2.
\]
\begin{remark}
Notice that the case $\HNSITES=1$ is special, where the two terms in the right hand side of the equation~(\ref{liouv}) are given in terms of one and the same value of the field variable $\vv_{m+1,t}=\vv_{m-1,t}$. This is similar to the affine Cartan matrix of the type $A^{(1)}_N$ for $N=1$. A modification shows up also in the defining commutation relations of the observable algebra, see relations~(\ref{comrel}) below.
\end{remark}

 \subsection{Algebra of observables and the evolution operator}

The algebra of observables is generated by a finite set of self-adjoint
operators $\{\ff_1,\ldots, \ff_{\NSITES}\}$. We shall think of them as an operator family parameterized by integers $\{\ff_j\}_{j\in\mathbb{Z}}$  satisfying
the periodicity condition
\begin{equation}\label{percon}
\ff_{j+\NSITES}=\ff_j.
\end{equation}
The defining commutation relations are as follows
\begin{equation}\label{comrel}
[\ff_m,\ff_n]=\left\{\begin{array}{cl}
(-1)^m(1+\delta_{\HNSITES,1})(2\pi\IMUN)^{-1},&\ \mathrm{if}\ n=m\pm1\pmod {\NSITES};\\
0,&\ \mathrm{otherwise}.
\end{array}\right.
\end{equation}
Taking into account the interpretation in terms of the Teichm\"uller space of an annulus (with $\HNSITES$ marked points on each boundary component), the operators $\ff_j$ can be considered as quantized logarithmic Fock coordinates, the commutation relations~(\ref{comrel}) exactly corresponding to the Poisson structure of the Teichm\"uller space.

The initial data for the field variables in (\ref{liouv})
 are exponentials
of the generating elements:
\[
\vv_{2j+1,0}=e^{2\pi\la\ff_{2j+1}},\quad\vv_{2j,-1}=e^{2\pi\la\ff_{2j}}.
\]
\begin{proposition}
Let operator $\LC$ be defined by the formula
\begin{equation}
  \label{eq:evol-op}
 \LC=\SHFL\prod_{j=1}^N\QDILOGI(\ff_{2j}), \quad \QDILOGI(x)=1/\QDILOG(x),
\end{equation}
where operator $\SHFL$ is defined through the system of linear equations
\begin{equation}
  \label{eq:shiftFlip}
\SHFL\ff_j=(-1)^{j}\ff_{j-1}\SHFL,\quad j\in\mathbb{Z}.
\end{equation}
Then, the field variables defined by the formula
\begin{equation}
  \label{eq:field-var}
\vv_{j,t}\equiv \LC^te^{2\pi\la\ff_{j+t}}\LC^{-t},\quad j+t=1\pmod2,
\end{equation}
satisfy the $\NSITES$-periodic quantum discrete Liouville equation~(\ref{liouv}).
\end{proposition}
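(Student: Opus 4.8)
The plan is to reduce the full space-time statement to a single evolution step and then verify that step by conjugating the Weyl exponentials $V_j \equiv e^{2\pi\la\ff_j}$ through $\LC$. Since $\vv_{j,s} = \LC^s V_{j+s}\LC^{-s}$, applying the inner automorphism $X \mapsto \LC^{-t}X\LC^t$ to~(\ref{liouv}) strips every operator of its overall dressing by $\LC^{\pm t}$ (the scalars $1,q,q^{1+2\delta_{\HNSITES,1}}$ being central); writing $k=m+t$, which is even because each field variable in the equation carries the parity constraint $j+t\equiv1\pmod 2$, the identity to be proved collapses to the $t$-independent relation
\[
(\LC V_{k+1}\LC^{-1})(\LC^{-1}V_{k-1}\LC) = (1 + qV_{k+1})(1 + q^{1+2\delta_{\HNSITES,1}}V_{k-1}),\qquad k\ \text{even}.
\]
Thus it suffices to understand how conjugation by $\LC^{\pm1}$ moves a single exponential.

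I would then treat the two conjugations separately. For the backward factor $\LC^{-1}V_{k-1}\LC$ only the shift part $\SHFL$ contributes: solving~(\ref{eq:shiftFlip}) gives $\SHFL^{-1}\ff_j\SHFL=(-1)^{j+1}\ff_{j+1}$, so $\SHFL^{-1}\ff_{k-1}\SHFL=\ff_k$, and since the even generator $\ff_k$ commutes with every factor $\QDILOGI(\ff_{2j})$ of the dilogarithmic part one finds $\LC^{-1}V_{k-1}\LC=V_k$ with no quantum-dilogarithm input at all; this is exactly the prescribed initial datum $\vv_{2j,-1}=e^{2\pi\la\ff_{2j}}$. The forward factor $\LC V_{k+1}\LC^{-1}$ is the substantive one. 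Because $\ff_{k+1}$ has odd index it fails to commute only with the two neighbouring even generators $\ff_k,\ff_{k+2}$, so conjugation by $\prod_j\QDILOGI(\ff_{2j})$ reduces to conjugating $V_{k+1}$ by $\QDILOG(\ff_k)^{-1}$ and $\QDILOG(\ff_{k+2})^{-1}$. The commutation relations~(\ref{comrel}) turn each inner exponential into an imaginary shift of the dilogarithm argument, and a single application of the shift equation~(\ref{shift}) converts $\QDILOG(\ff_{2j})^{-1}V_{k+1}\QDILOG(\ff_{2j})$ into the linear factor $V_{k+1}(1+q^{-1}V_{2j})$. After relabelling indices with $\SHFL$ this gives, for $\HNSITES\ge2$,
\[
\LC V_{k+1}\LC^{-1} = V_k^{-1}(1 + q^{-1}V_{k-1})(1 + q^{-1}V_{k+1}).
\]

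Finally I would multiply this by $\LC^{-1}V_{k-1}\LC=V_k$ and clear the conjugating $V_k^{\mp1}$ using the Weyl relation $V_k^{-1}V_{k\pm1}V_k=q^{2}V_{k\pm1}$, which follows from~(\ref{comrel}) by Baker--Campbell--Hausdorff. This converts each $q^{-1}$ into $q$, producing $(1+qV_{k-1})(1+qV_{k+1})$; since $V_{k-1}$ and $V_{k+1}$ commute for $\HNSITES\ge2$ the two linear factors may be transposed to match the right-hand side. The step I expect to be the main obstacle is the degenerate case $\HNSITES=1$, flagged in the remark following~(\ref{liouv}): there $\NSITES=2$, the two neighbours $\ff_k$ and $\ff_{k+2}$ coincide, the commutator in~(\ref{comrel}) acquires the extra factor $1+\delta_{\HNSITES,1}=2$, and consequently the shift equation~(\ref{shift}) must be iterated twice rather than once. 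I would check that the two resulting linear factors collapse onto the single variable $V_{k+1}=V_{k-1}$ with the powers of $q$ combining precisely into the exponent $1+2\delta_{\HNSITES,1}=3$ demanded by~(\ref{liouv}), thereby confirming the special treatment built into both~(\ref{comrel}) and~(\ref{liouv}).
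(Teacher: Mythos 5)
Your proof is correct, and every step checks out against the structure constants of the paper: with $k=m+t$ even, relations~(\ref{comrel}) give $V_{k+1}\ff_{2j}V_{k+1}^{-1}=\ff_{2j}+\IMUN\la(1+\delta_{\HNSITES,1})$ for $2j\in\{k,k+2\}$ (in your notation $V_j=e^{2\pi\la\ff_j}$), so the shift equation~(\ref{shift}) applied once (twice when $\HNSITES=1$) yields exactly your factors $1+q^{-1}V_{2j}$ (respectively $(1+q^{-1}V_k)(1+q^{-3}V_k)$), the $\SHFL$-relabelling and the Weyl relation $V_k^{-1}V_{k\pm1}V_k=q^{2(1+\delta_{\HNSITES,1})}V_{k\pm1}$ recombine these into $(1+qV_{k+1})(1+q^{1+2\delta_{\HNSITES,1}}V_{k-1})$ as required, and your computation $\LC^{-1}V_{k-1}\LC=V_k$ correctly recovers the initial datum $\vv_{2j,-1}$. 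The paper states this proposition without proof (it is quoted from the strongly coupled quantum discrete Liouville papers \cite{fkv,fk}), and your argument is precisely the direct verification the statement calls for, including the special treatment of $\HNSITES=1$ where the doubled commutator forces the doubled shift and produces the exponent $1+2\delta_{\HNSITES,1}=3$.
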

\begin{remark}
Note that the operator $\LC$ is identified with the "light-cone" evolution operator in the sense that
it realizes the appropriate translation in the "space-time" lattice through the formula
\[
\LC\chi_{j,t}\LC^{-1}=\chi_{j-1,t+1}.
\]
\end{remark}
\begin{remark}
Because of the duality symmetry
$\la\leftrightarrow\la^{-1}$ in the theory, there  actually
exist two types of exponential fields $\exp(2\pi\la^{\pm1} \ff_m)$
which satisfy two dual quantum discrete Liouville equations.
\end{remark}

\subsection{Integrable structure of the quantum discrete Liouville equation} The quantum discrete Liouville equation is integrable in the sense of the quantum inverse scattering method \cite{faddeev1}. That means that it admits a set of commuting operators with the evolution operator being one of them, and there is a system of linear difference equations, called Baxter equations, relating  these operators.

In what follows, the order in products with non-commuting entries will be indicated as
follows:
\[
\DESPR{i}{m}{n} a_i\equiv a_na_{n-1}\cdots a_{m+1}a_m,\quad
\ASPR{i}{m}{n} a_i\equiv a_ma_{m+1}\cdots a_{n-1}a_n.
\]
Consider algebra $\ALG$ of operators with a generalized linear basis of the form
\[
\ASPR{i}{1}{\NSITES}
e^{2\pi\IMUN\ff_i x_i},
\]
where self-adjoint operators $\ff_i$
satisfy  commutation relations (\ref{comrel}), and variables $x_i$
take real or complex values. The term "generalized" here means that a generic element of the algebra $\ALG$ is an integral of the form
\[
\int_{X^{\NSITES}} f(x_1,\ldots,x_{\NSITES})\left(\ASPR{i}{1}{\NSITES}
e^{2\pi\IMUN\ff_i x_i}\right)dx_1\cdots dx_{\NSITES},
\]
where $f(x_1,\ldots,x_{\NSITES})$ is a complex valued distribution (generalized function), and
$X^{\NSITES}\subset\mathbb{C}^{\NSITES}$ is a $\NSITES$-dimensional (over $\mathbb{R}$) sub-manifold.

The  ascending {\em cyclic product} is a set of linear
mappings,
\[
\ACYC_{j}\colon \ALG\rightarrow\ALG,\quad j\in\INTEGERS,\quad
\ACYC_{j}=\ACYC_{j+\NSITES},
\]
acting diagonally on the basis monomials:
\[
\ACYC_{1}(\ASPR{i}{1}{\NSITES}
e^{2\pi\IMUN\ff_i x_i})\equiv e^{2\pi\IMUN x_\NSITES x_1}
 \ASPR{i}{1}{\NSITES}
e^{2\pi\IMUN\ff_i x_i}\equiv\ACYC_{j}(\ASPR{i}{j}{ j+\NSITES-1}
e^{2\pi\IMUN\ff_i x_i}).
\]
We define the "transfer-matrices"
\begin{equation}\label{tmat}
\transfa^\pm(\mu)=\ACYC_1\tr\ASPR{j}{1}{\NSITES} \lmat^\pm_j,
\end{equation}
where
\begin{equation}\label{lop}
\lmat^\pm_j=
\left(\begin{array}{cc}
e^{(-1)^j\pi\la^{\pm1}(\mu-\ff_j)}&e^{-(-1)^j\pi\la^{\pm1}(\mu+\ff_j)}[j+1]_2\\
e^{-(-1)^j\pi\la^{\pm1}(\mu-\ff_j)}&e^{(-1)^j\pi\la^{\pm1}(\mu+\ff_j)}
\end{array}\right),
\end{equation}
\[
[j]_2=(1-(-1)^j)/2,
\]
and the trace is that of two-by-two matrices. We also define a "$Q$-operator"
\begin{equation}\label{qmat}
\qmat(\mu)
=\ACYC_1\left(\ASPR{j}{1}{\NSITES}
\rsmall_{[j]_2}(\mu,\ff_j)\right)\SHFL,\quad \mu\in\mathbb{C,}
\end{equation}
where
\begin{equation}\label{rmat}
e^{-2\pi\IMUN\mu x}\QDILOG(x-\mu)\rsmall_i(\mu,x)\equiv\left\{\begin{array}{cl}
\QDILOG(x+\mu)&,\ \mathrm{if}\ i=0;\\
1&,\ \mathrm{if}\ i=1,
\end{array}\right.
\end{equation}
and $\SHFL$ is  defined by equation~(\ref{eq:shiftFlip}).
\begin{proposition}
The transfer-matrices~(\ref{tmat}) and the $Q$-operator~(\ref{qmat}) commute among themselves
\begin{equation}\label{commut}
[\transfa^\epsilon(\mu),\transfa^\pm(\nu)]=[\qmat(\mu),\qmat(\nu)]=
[\transfa^\pm(\mu),\qmat(\nu)]=0, \quad \epsilon=\pm,
\end{equation}
solve the following Baxter equations:
\begin{equation}\label{baxeq}
\transfa^\pm(\mu)\qmat(\mu)=\qmat(\mu+\IMUN\la^{\pm1}/2)+
(1-e^{-4\pi\la^{\pm1}\mu})^\HNSITES\qmat(\mu-\IMUN\la^{\pm1}/2),
\end{equation}
and the evolution operator $\LC$ of the quantum discrete Liouville equation is given by the formula
\begin{equation}\label{eq:lcq}
\LC=\qmat(0).
\end{equation}
\end{proposition}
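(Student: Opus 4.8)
The plan is to treat all of~(\ref{commut}),~(\ref{baxeq}) and~(\ref{eq:lcq}) by the standard machinery of the quantum inverse scattering method, reducing everything to \emph{local} relations between the building blocks~(\ref{lop}) and~(\ref{rmat}) that are then propagated around the periodic chain. The commutativity statements will come from fundamental commutation relations of $RLL$-type together with the cyclicity built into $\ACYC_1$; the Baxter equations and the commutativity $[\transfa^\pm(\mu),\qmat(\nu)]=0$ will come from a single local intertwining relation between $\lmat^\pm_j$ and the weights $\rsmall_i$; and the identification $\LC=\qmat(0)$ will be a direct evaluation at $\mu=0$.

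First I would establish commutativity of the transfer-matrices. For each pair of spectral parameters I look for a matrix $R(\mu,\nu)$ acting in the tensor product of the two auxiliary $\mathbb{C}^2$ spaces such that the site-by-site relation
\[
R(\mu,\nu)\,\bigl(\lmat^\epsilon_j(\mu)\otimes\lmat^\pm_j(\nu)\bigr)
=\bigl(\lmat^\pm_j(\nu)\otimes\lmat^\epsilon_j(\mu)\bigr)\,R(\mu,\nu)
\]
holds; the existence of such an $R$ (a trigonometric six-vertex type matrix in the multiplicative variable $e^{2\pi\la^{\pm1}\mu}$) is dictated by the Weyl form of the entries of~(\ref{lop}) and the relations~(\ref{comrel}). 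Multiplying over $j=1,\ldots,\NSITES$, inserting $R^{-1}R$ at the seam, sliding $R$ through the monodromy one site at a time, and using the cyclic-product property of $\ACYC_1$ in place of trace cyclicity cancels the end factors and yields $[\transfa^\epsilon(\mu),\transfa^\pm(\nu)]=0$ for the diagonal cases $\epsilon$ equal to $\pm$. For the mixed case $\transfa^+$ against $\transfa^-$ the same scheme applies with a mixed $R$-matrix, the essential input being the modular-duality property that full exponentials of the two dual types commute because $[\ff_m,\ff_n]$ is central.

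Next comes the Baxter pair, which I would derive from one local relation. Using the shift equations~(\ref{shift}) for $\QDILOG$, the defining equation~(\ref{rmat}) for $\rsmall_i$ is precisely the statement that $\lmat^\pm_j(\mu)$ intertwines the local weight $\rsmall_{[j]_2}(\mu,\ff_j)$ with its shifts $\rsmall_{[j]_2}(\mu\pm\IMUN\la^{\pm1}/2,\ff_j)$ up to a lower-triangular $2\times2$ matrix whose diagonal entries are $1$ and a factor that multiplies, around the chain, to $(1-e^{-4\pi\la^{\pm1}\mu})^{\HNSITES}$. Propagating this relation around the periodic chain, the triangular structure makes the off-diagonal contributions telescope under $\ACYC_1$, while the surviving diagonal terms assemble on one side into $\transfa^\pm(\mu)\qmat(\mu)$ and on the other into $\qmat(\mu+\IMUN\la^{\pm1}/2)$ plus the shifted term weighted by the accumulated $c$-number factor — exactly~(\ref{baxeq}). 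Reading the same local relation with the two spectral parameters exchanged gives $[\transfa^\pm(\mu),\qmat(\nu)]=0$, and an analogous intertwining of two copies of the $\rsmall$-weights gives $[\qmat(\mu),\qmat(\nu)]=0$.

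Finally I would verify~(\ref{eq:lcq}) by direct computation. Setting $\mu=0$ in~(\ref{rmat}) gives $\rsmall_0(0,x)=1$ and $\rsmall_1(0,x)=\QDILOGI(x)$, so every even-site factor in~(\ref{qmat}) trivializes; the surviving odd-site generators $\ff_j$ all mutually commute, so $\ACYC_1$ acts trivially and the product collapses to $\prod_{j\ \mathrm{odd}}\QDILOGI(\ff_j)\,\SHFL$. Commuting $\SHFL$ to the left by~(\ref{eq:shiftFlip}), which sends $\ff_{2j}\mapsto\ff_{2j-1}$, rewrites this as $\SHFL\prod_{j=1}^{N}\QDILOGI(\ff_{2j})$, i.e.\ the operator $\LC$ of~(\ref{eq:evol-op}). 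The main obstacle throughout is the verification of the local relations of the second and third paragraphs: pinning down the correct $R$-matrix and mixed $R$-matrix, and above all checking that the scalar factors generated by~(\ref{shift}) assemble into the precise quantum-determinant coefficient $(1-e^{-4\pi\la^{\pm1}\mu})^{\HNSITES}$. Tracking these $c$-number prefactors, together with the careful handling of the $\HNSITES=1$ modification in~(\ref{comrel}), is where the real work lies.
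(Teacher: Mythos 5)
Your verification of \eqref{eq:lcq} is correct and is exactly the paper's (unwritten) ``straightforward verification'': at $\mu=0$ the even-site weights $\rsmall_0(0,\ff_j)$ equal $1$, the odd-site weights become $\QDILOGI(\ff_j)$, the cyclic product $\ACYC_1$ acts trivially because no $\ff_\NSITES$-dependence survives in the expansion, and pushing $\SHFL$ through via \eqref{eq:shiftFlip} turns $\prod_{j=1}^N\QDILOGI(\ff_{2j-1})\SHFL$ into the operator $\LC$ of \eqref{eq:evol-op}. Your treatment of the commutativity relations \eqref{commut} is also the route the paper indicates (``the standard argument by using the Yang--Baxter equations''), and it stays at the same level of sketch as the paper itself, so I will not count the unexhibited $R$-matrix against you there.

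The genuine gap is in your derivation of the Baxter equations \eqref{baxeq}. You assert that ``the defining equation \eqref{rmat} for $\rsmall_i$ is precisely the statement that $\lmat^\pm_j(\mu)$ intertwines the local weight $\rsmall_{[j]_2}(\mu,\ff_j)$ with its shifts \dots up to a lower-triangular $2\times2$ matrix.'' This is not so: \eqref{rmat} is nothing but the definition of the scalar functions $\rsmall_0,\rsmall_1$; it carries no information about how the matrix $\lmat^\pm_j$ acts, and the triangular local relation --- its existence, its entries, and the claim that the accumulated scalar factor is $(1-e^{-4\pi\la^{\pm1}\mu})^\HNSITES$ --- is exactly the content of \eqref{baxeq} and is nowhere established; you concede as much when you say this is ``where the real work lies.'' Worse, the telescoping you invoke presupposes the ultra-locality of standard QISM, i.e.\ that quantum operators attached to distinct sites commute, whereas here adjacent generators satisfy $[\ff_m,\ff_{m\pm1}]\ne0$ by \eqref{comrel}, and the $Q$-operator terminates in the shift $\SHFL$, which displaces every $\ff_j$; both features obstruct a naive site-by-site propagation around the chain. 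This is precisely the point where the paper departs from your plan: it refers the Baxter equations to \cite{k} and explicitly notes that the proof there ``uses a less standard argument,'' rather than the standard $TQ$ triangularization you propose. So the core of your proof of \eqref{baxeq} is missing, and there is concrete reason to doubt it can be completed in the form you describe without substantial additional input.
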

Formula~(\ref{eq:lcq}) is verified straightforwardly,
the commutativity part of the proposition is the standard argument by using the Yang--Baxter equations, while the proof of the Baxter equations given in \cite{k} uses a less standard argument.
\begin{remark}
The product of two neighboring $L$-operators
$\lmat^+_{2i}\lmat^+_{2i+1}$
is equivalent to the spectral parameter dependent
$L$-operator introduced in \cite{tirkFad}
for the description of the
(continuous) Liouville equation in the framework of the inverse scattering method.
\end{remark}

\subsection{The case $\HNSITES=1$}

When $\HNSITES=1$, the algebra $\ALG$ is generated by a single Heisenberg pair of position and momentum operators $\MOM$ and $\POS$:
\[
\GEN_1=-\MOM-\POS,\quad \GEN_2=-\MOM+\POS,\quad [\MOM,\POS]=(2\pi\IMUN)^{-1}.
\]
Calculation of the operators \eqref{tmat} and \eqref{qmat} at $\HNSITES=1$ gives the following result:
\[
\transfa(z)=L^+(\MOM,\POS)\equiv e^{2\pi\la\MOM}+2\cosh(2\pi\la\POS),
\]
\[
\zetao^3\qmat\left(\frac{z-\cla}2\right)=\int_{\REALS}
e^{\IMUN2\pi\cla(x+\cla)}
Q(-x-\cla+\IMUN0,-\infty;\MOM,\POS)e^{\IMUN2\pi xz}dx
\]
where $\zetao$ is defined in equation~(\ref{eq:zetas}) and
\[
Q(u,-\infty;\MOM,\POS)=e^{\IMUN\pi\POS^2}\QDILOG(u+\POS)
\QDILOGI(\MOM)\QDILOG(u-\POS)e^{\IMUN\pi\POS^2},
\]
see also equation~\eqref{eq:quv}. Calculation of the integral gives the formula
\[
\qmat(0)=\QDILOG(\MOM+\POS)
e^{\IMUN2\pi(\cla^2-\POS^2)}.
\]
By acting on the vectors $|\alpha_s\rangle$, and using equation~\eqref{eq:qeig}, we obtain
\begin{equation}\label{eq:tmoneq}
\transfa(z)|\alpha_s\rangle=L^+(\MOM,\POS)|\alpha_s\rangle=
|\alpha_s\rangle 2\cosh(2\pi\la s)
\end{equation}
and
\begin{equation}\label{eq:qmoneq}
\qmat\left(\frac{z-\cla}2\right)|\alpha_s\rangle=|\alpha_s\rangle
\zeta_o^{-1}e^{\IMUN\pi (s^2+2\cla^2)}[z|\alpha_s\rangle,
\end{equation}
where the vector
\[
[z|\equiv\int_{\REALS}dx
e^{\IMUN2\pi xz-\IMUN\pi x^2}\langle x|
\]
is such that
\begin{gather}\label{eq:inter}
[z|\MOM=p_z[z|,\quad [z|\POS=q_z[z|,\\
p_z\equiv\frac1{2\pi\IMUN}\frac{\partial}{\partial z}-z,\quad
q_z\equiv\frac1{2\pi\IMUN}\frac{\partial}{\partial z}.
\end{gather}
On the other hand, the Baxter equation~(\ref{baxeq}) at $\HNSITES=1$
can formally be written in the form
\begin{equation}\label{eq:nbe}
(L^+(p_z,q_z)-L^+(\MOM,\POS))\qmat\left(\frac{z-\cla}2\right)=0
\end{equation}
which, when applied to the vector $|\alpha_s\rangle$,
is reduced to an identity by using equations~(\ref{eq:tmoneq})~---
(\ref{eq:inter}).
\section{Relation to quantum Teichm\"uller theory}
\subsection{Highlights of quantum Teichm\"uller theory}
\subsubsection{Groupoid of decorated ideal triangulations}
Let $\Sigma=\Sigma_{g,s}$ be an oriented surface of genus $g$ with $s$ punctures. Denote $\EULER=2g-2+s$ and assume that $\EULER s>0$. Then surface $\Sigma$ admits ideal triangulations.
\begin{definition}
A \emph{decorated ideal triangulation} of $\Sigma$ is an ideal triangulation
$\tau$, where all triangles are provided with a marked corner, and a bijective ordering map
    \[
\bar\tau\colon
\{1,\ldots,2\EULER\}\ni j\mapsto\bar\tau_j\in\TRIANGLES(\tau)
    \]
is fixed. Here $\TRIANGLES(\tau)$ is the set of all triangles of $\tau$.
\end{definition}
Graphically,  the marked corner of a triangle is indicated by an asterisk and the corresponding number is put inside the triangle. The set of all decorated ideal triangulations of  $\SURFACE$ is denoted
$\SDIT$.

Recall that if a group $G$ freely acts in a set $X$ then there is an associated groupoid defined as follows. The objects are the $G$-orbits in $X$, while morphisms are $G$-orbits in $X\times X$ with respect to the diagonal action. Denote by $[x]$ the object represented by element $x\in X$ and $[x,y]$ the morphism represented by pair of elements $(x,y)\in X\times X$. Two morphisms $[x,y]$ and $[u,v]$, are composable if and only if $[y]=[u]$ and their composition is  $[x,y][u,v]=[x,gv]$, where $g\in G$ is the unique element sending $u$ to $y$. The inverse and the identity morphisms are given respectively by $[x,y]^{-1}=[y,x]$ and $\mathrm{id}_{[x]}=[x,x]$.

Remarking that the mapping class group $\MCG$ of $\Sigma$ freely acts in $\SDIT$, denote by $\gpd$ the corresponding groupoid, called the \emph{groupoid of decorated ideal triangulations}. It admits a presentation with three types of generators and four types of relations.

The generators are of the form $[\tau,\tau^\sigma]$, $[\tau,\rho_i\tau]$, and $[\tau,\omega_{i,j}\tau]$, where $\tau^\sigma$ is obtained from $\tau$ by replacing the ordering map $\bar\tau$ by the map $\bar{\tau}\circ\sigma$, where $\sigma\in\mathbb{S}_{2\EULER}$ is a permutation of the set $\{1,\ldots,2\EULER\}$, $\rho_i\tau$ is obtained from $\tau$ by changing the marked corner of triangle $\bar\tau_i$ as in figure~\ref{ft}, and $\omega_{i,j}\tau$ is obtained from $\tau$ by applying the flip transformation in the quadrilateral composed of triangles $\bar\tau_i$ and $\bar\tau_j$ as in figure~\ref{fe}.
\begin{figure}[htb]
  \centering
\begin{picture}(200,20)
\put(0,0){\begin{picture}(40,20)
\put(0,0){\line(1,0){40}}
\put(0,0){\line(1,1){20}}
\put(20,20){\line(1,-1){20}}
\put(0,0){\circle*{3}}
\put(20,20){\circle*{3}}
\put(40,0){\circle*{3}}
\footnotesize
\put(33,0){$*$}
\put(18,5){$i$}
\end{picture}}
\put(160,0){\begin{picture}(40,20)
\put(0,0){\line(1,0){40}}
\put(0,0){\line(1,1){20}}
\put(20,20){\line(1,-1){20}}
\put(0,0){\circle*{3}}
\put(20,20){\circle*{3}}
\put(40,0){\circle*{3}}
\footnotesize
\put(17.5,14){$*$}
\put(18,5){$i$}
\end{picture}}
\put(95,8){$\stackrel{\rho_i}{\longrightarrow}$}
\end{picture}
\caption{Transformation $\rho_i$.}\label{ft}
\end{figure}
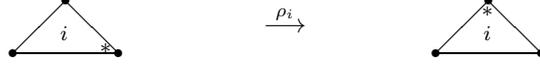
\begin{figure}[htb]
  \centering
\begin{picture}(200,40)
\put(0,0){
\begin{picture}(40,40)
\put(20,0){\line(-1,1){20}}
\put(40,20){\line(-1,-1){20}}
\put(0,20){\line(1,1){20}}
\put(40,20){\line(-1,1){20}}
\put(20,0){\line(0,1){40}}
\put(20,0){\circle*{3}}
\put(0,20){\circle*{3}}
\put(20,40){\circle*{3}}
\put(40,20){\circle*{3}}
\footnotesize
\put(10,18){$i$}\put(26,18){$j$}
\put(1,18){$*$}
\put(19.5,2){$*$}
\end{picture}}
\put(160,0){\begin{picture}(40,40)
\put(20,0){\line(-1,1){20}}
\put(40,20){\line(-1,-1){20}}
\put(0,20){\line(1,1){20}}
\put(40,20){\line(-1,1){20}}
\put(0,20){\line(1,0){40}}
\put(20,0){\circle*{3}}
\put(0,20){\circle*{3}}
\put(20,40){\circle*{3}}
\put(40,20){\circle*{3}}
\footnotesize
\put(18,26){$i$}\put(18,10){$j$}
\put(3,20){$*$}
\put(17.5,1){$*$}
\end{picture}}
\put(95,17){$\stackrel{\omega_{i,j}}{\longrightarrow}$}
\end{picture}
\caption{Transformation $\omega_{i,j}$.}\label{fe}
\end{figure}
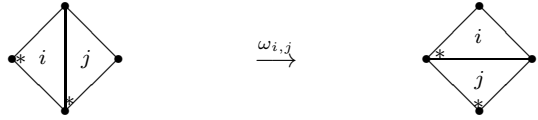

These generators satisfy the following relations:
\begin{gather}
\label{eq:23}
[\tau,(\tau^\alpha)^\beta]=[\tau,\tau^{\alpha\beta}],\quad \alpha,\beta\in\mathbb{S}_{2\EULER},\\
  \label{eq:19}
  [\tau,\rho_i\rho_i\rho_i\tau]=\mathrm{id}_{[\tau]},\\\label{eq:22}
  [\tau,\omega_{j,k}\omega_{i,k}\omega_{i,j}\tau]
  =[\tau,\omega_{i,j}\omega_{j,k}\tau],
\\
\label{eq:20}
[\tau,\omega_{j,i}\rho_i\omega_{i,j}\tau]=[\tau,\rho_i\rho_j\tau^{(ij)}].
\end{gather}
The first two relations are evident, while the other two are shown graphically in figures~\ref{fig:pen-om},~\ref{fig:inv-rel}.
\begin{figure}[htb]
  \centering
\begin{picture}(290,160)
\put(0,90){\begin{picture}(70,70)
\put(14,0){\line(-1,3){14}}
\put(56,0){\line(1,3){14}}
\put(0,42){\line(5,4){35}}
\put(70,42){\line(-5,4){35}}
\put(14,0){\line(1,0){42}}
\qbezier(14,0)(20,20)(35,70)
\qbezier(56,0)(50,20)(35,70)
\put(14,0){\circle*{3}}
\put(56,0){\circle*{3}}
\put(0,42){\circle*{3}}
\put(70,42){\circle*{3}}
\put(35,70){\circle*{3}}
\footnotesize
\put(1,38.5){$*$}
\put(15.5,0){$*$}
\put(53.75,6){$*$}
\put(14,35){$i$}
\put(33,17){$j$}
\put(55,35){$k$}
\end{picture}}
\put(110,90){\begin{picture}(70,70)
\put(14,0){\line(-1,3){14}}
\put(56,0){\line(1,3){14}}
\put(0,42){\line(5,4){35}}
\put(70,42){\line(-5,4){35}}
\put(14,0){\line(1,0){42}}
\put(56,0){\line(-4,3){56}}
\qbezier(56,0)(50,20)(35,70)
\put(14,0){\circle*{3}}
\put(56,0){\circle*{3}}
\put(0,42){\circle*{3}}
\put(70,42){\circle*{3}}
\put(35,70){\circle*{3}}
\footnotesize
\put(2,39.5){$*$}
\put(14,0){$*$}
\put(53.75,6){$*$}
\put(28,35){$i$}
\put(20,9){$j$}
\put(55,35){$k$}
\end{picture}}
\put(220,90){\begin{picture}(70,70)
\put(14,0){\line(-1,3){14}}
\put(56,0){\line(1,3){14}}
\put(0,42){\line(5,4){35}}
\put(70,42){\line(-5,4){35}}
\put(14,0){\line(1,0){42}}
\put(56,0){\line(-4,3){56}}
\put(0,42){\line(1,0){70}}
\put(14,0){\circle*{3}}
\put(56,0){\circle*{3}}
\put(0,42){\circle*{3}}
\put(70,42){\circle*{3}}
\put(35,70){\circle*{3}}
\footnotesize
\put(4,41.5){$*$}
\put(14,0){$*$}
\put(52.25,2.5){$*$}
\put(33,50){$i$}
\put(20,9){$j$}
\put(45,24 ){$k$}
\end{picture}}
\put(55,0){\begin{picture}(70,70)
\put(14,0){\line(-1,3){14}}
\put(56,0){\line(1,3){14}}
\put(0,42){\line(5,4){35}}
\put(70,42){\line(-5,4){35}}
\put(14,0){\line(1,0){42}}
\qbezier(14,0)(20,20)(35,70)
\put(14,0){\line(4,3){56}}
\put(14,0){\circle*{3}}
\put(56,0){\circle*{3}}
\put(0,42){\circle*{3}}
\put(70,42){\circle*{3}}
\put(35,70){\circle*{3}}
\footnotesize
\put(1,38.5){$*$}
\put(17,5){$*$}
\put(51.5,0){$*$}
\put(14,35){$i$}
\put(39,35){$j$}
\put(45,8){$k$}
\end{picture}}
\put(165,0){\begin{picture}(70,70)
\put(14,0){\line(-1,3){14}}
\put(56,0){\line(1,3){14}}
\put(0,42){\line(5,4){35}}
\put(70,42){\line(-5,4){35}}
\put(14,0){\line(1,0){42}}
\put(0,42){\line(1,0){70}}
\put(14,0){\line(4,3){56}}
\put(14,0){\circle*{3}}
\put(56,0){\circle*{3}}
\put(0,42){\circle*{3}}
\put(70,42){\circle*{3}}
\put(35,70){\circle*{3}}
\footnotesize
\put(4,41.5){$*$}
\put(13.5,2){$*$}
\put(51.5,0){$*$}
\put(33,50){$i$}
\put(26,24){$j$}
\put(45,8){$k$}
\end{picture}}
\put(35,70){$\searrow$}\put(43,74){\tiny$\omega_{j,k}$}
\put(245,70){$\swarrow$}\put(235,74){\tiny$\omega_{j,k}$}
\put(135,28){$\stackrel{\omega_{i,j}}{\longrightarrow}$}
\put(80,118){$\stackrel{\omega_{i,j}}{\longrightarrow}$}
\put(190,118){$\stackrel{\omega_{i,k}}{\longrightarrow}$}
\end{picture}
  \caption{Pentagon relation~\eqref{eq:22}.}
  \label{fig:pen-om}
\end{figure}

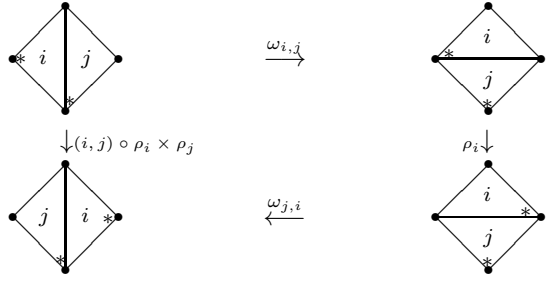
\begin{figure}[htb]
  \centering
  \begin{picture}(200,100)
\put(0,60){\begin{picture}(200,40)
\put(0,0){\begin{picture}(40,40)
\put(20,0){\line(-1,1){20}}
\put(40,20){\line(-1,-1){20}}
\put(0,20){\line(1,1){20}}
\put(40,20){\line(-1,1){20}}
\put(20,0){\line(0,1){40}}
\put(20,0){\circle*{3}}
\put(0,20){\circle*{3}}
\put(20,40){\circle*{3}}
\put(40,20){\circle*{3}}
\footnotesize
\put(10,18){$i$}\put(26,18){$j$}
\put(1,18){$*$}
\put(19.5,2){$*$}
\end{picture}}
\put(160,0){\begin{picture}(40,40)
\put(20,0){\line(-1,1){20}}
\put(40,20){\line(-1,-1){20}}
\put(0,20){\line(1,1){20}}
\put(40,20){\line(-1,1){20}}
\put(0,20){\line(1,0){40}}
\put(20,0){\circle*{3}}
\put(0,20){\circle*{3}}
\put(20,40){\circle*{3}}
\put(40,20){\circle*{3}}
\footnotesize
\put(18,26){$i$}\put(18,10){$j$}
\put(3,20){$*$}
\put(17.5,1){$*$}
\end{picture}}
\put(95,17){$\stackrel{\omega_{i,j}}{\longrightarrow}$}
\end{picture}}
\put(18,46){$\downarrow$\tiny$(i,j)\circ\rho_i\times\rho_j$}
\put(170,46){{\tiny$\rho_i$}$\downarrow$}
\put(0,0){\begin{picture}(200,40)
\put(0,0){\begin{picture}(40,40)
\put(20,0){\line(-1,1){20}}
\put(40,20){\line(-1,-1){20}}
\put(0,20){\line(1,1){20}}
\put(40,20){\line(-1,1){20}}
\put(20,0){\line(0,1){40}}
\put(20,0){\circle*{3}}
\put(0,20){\circle*{3}}
\put(20,40){\circle*{3}}
\put(40,20){\circle*{3}}
\footnotesize
\put(10,18){$j$}\put(26,18){$i$}
\put(16,2){$*$}
\put(34,17.5){$*$}
\end{picture}}
\put(160,0){\begin{picture}(40,40)
\put(20,0){\line(-1,1){20}}
\put(40,20){\line(-1,-1){20}}
\put(0,20){\line(1,1){20}}
\put(40,20){\line(-1,1){20}}
\put(0,20){\line(1,0){40}}
\put(20,0){\circle*{3}}
\put(0,20){\circle*{3}}
\put(20,40){\circle*{3}}
\put(40,20){\circle*{3}}
\footnotesize
\put(18,26){$i$}\put(18,10){$j$}
\put(32,20){$*$}
\put(17.5,1){$*$}
\end{picture}}
\put(95,17){$\stackrel{\omega_{j,i}}{\longleftarrow}$}
\end{picture}}
\end{picture}
  \caption{Inversion relation~\eqref{eq:20}.}
  \label{fig:inv-rel}
\end{figure}

\subsubsection{Hilbert spaces of square integrable functions}
\label{sec:q-functor}

In what follows, we work with Hilbert spaces
\[
\vsp\equiv L^2(\REALS),\quad \vsp^{\otimes n}\equiv L^2(\REALS^n).
\]
Any two self-adjoint operators
$\MOM$ and $\POS$, acting in $\vsp$ and satisfying the Heisenberg commutation relation~(\ref{heisen}),
can be realized as differentiation and multiplication operators. Such
"coordinate" realization in Dirac's bra-ket notation has the form
\begin{equation}\label{eq:mdo}
\langle x|\MOM =\frac1{2\pi\IMUN}\frac{\partial}{\partial x}\langle x|,
\quad \langle x|\POS =x\langle x|.
\end{equation}
Formally, the set of "vectors" $\{|x\rangle\}_{x\in\mathbb{R}}$ forms a generalized basis of $\vsp$ with the following orthogonality and completeness properties:
\[
\langle x|y\rangle=\delta(x-y),\quad \int_{\mathbb{R}}|x\rangle dx\langle x|=1.
\]
For any $1\le i\le m$ we shall use the following notation
        \[
        \iota_i\colon \End\vsp\ni \sfa\mapsto
        \sfa_i=\underbrace{1\otimes\cdots\otimes1}_{i-1\ \mathrm{times}}\otimes
        \sfa\otimes1\otimes\cdots\otimes1
        \in\End\vsp^{\otimes m}.
         \]
Besides that, if $\sfu\in \End\vsp^{\otimes k}$ for some $1\le
         k\le m$ and  $\{i_1,i_2,\ldots,i_k\}\subset\{1,2,\ldots,m\}$,
         then we shall write
    \[
\sfu_{i_1i_2\ldots i_2}\equiv\iota_{i_1}\otimes\iota_{i_2}\otimes\cdots
\otimes\iota_{i_k}(\sfu).
    \]
The permutation group $\PGROUP_m$ naturally acts in $\vsp^{\otimes m}$:
    \begin{equation}\label{eq:perm}
\PERMUTE_\sigma (x_1\otimes\cdots\otimes
x_i\otimes\cdots) =x_{\sigma^{-1}(1)}\otimes\cdots\otimes
x_{\sigma^{-1}(i)}\otimes\ldots,\quad\sigma\in \PGROUP_m.
    \end{equation}

\subsubsection{Semi-symmetric $T$-matrix}
\label{sec:inii-aeaa-neno}

Fix self-conjugate operators $\MOM,\POS$ satisfying the Heisenberg commutation relation~(\ref{heisen}). Choose a parameter $\la$ satisfying the condition
\[
(1-|\la|)\Im\la=0,
\]
 and define two unitary operators
\begin{gather}
  \label{eq:5}
  \ROTATE\equiv e^{-\IMUN\pi/3}e^{\IMUN
    3\pi\POS^2}e^{\IMUN\pi(\MOM+\POS)^2}\in \End\vsp,\\
\label{eq:t-in-t-of-psi}
\PTOLEMY\equiv e^{\IMUN 2\pi\MOM_1\POS_2}
\QDILOG(\POS_1+\MOM_2-\POS_2)\in \End\vsp^{\otimes2}.
    \end{gather}
They satisfy the following relations characterizing a \emph{semi-symmetric $T$-matrix}:
\begin{gather}
  \label{eq:1}
\ROTATE^3=1, \\
 \label{eq:2}
\PTOLEMY_{12}\PTOLEMY_{13}\PTOLEMY_{23}=\PTOLEMY_{23}\PTOLEMY_{12}, \\
  \label{eq:4}
\PTOLEMY_{12} \ROTATE_1\PTOLEMY_{21}=\zeta\ROTATE_1\ROTATE_2\PERMUTE_{(12)},
\end{gather}
where
\begin{equation}
  \label{eq:prfac}
  \zeta\equiv e^{\IMUN\pi\cla^2/3},\quad \cla=\frac\imun2(\la+\la^{-1}),
\end{equation}
 and operator $\PERMUTE_{(12)}$ is defined by equation~\eqref{eq:perm} in the case when
\[
\PGROUP_2\ni\sigma=(12)\colon 1\mapsto2\mapsto1.
\]
 Operator $\ROTATE$ is characterized (up to a normalization factor) by the equations
\[
\ROTATE\POS\ROTATE^{-1}=\MOM-\POS,\quad\ROTATE\MOM\ROTATE^{-1}=-\POS.
\]
Note that equations~(\ref{eq:1})---(\ref{eq:4}) correspond to relations~
\eqref{eq:19}---\eqref{eq:20}. This fact is the base of using the former to realize the latter.

\subsubsection{Useful notation}
\label{sec:useful-notation}

For any operator $\sfa\in\End \vsp$ we shall denote
\begin{equation}
  \label{eq:14}
  \sfa_{\hat k}\equiv \ROTATE_k\sfa_k\ROTATE_k^{-1},\quad
 \sfa_{\check k}\equiv \ROTATE_k^{-1}\sfa_k\ROTATE_k.
\end{equation}
It is evident that
\[
\sfa_{\check{\hat k}}=\sfa_{\hat{\check k}}=\sfa_k,\quad \sfa_{\hat{\hat
    k}}=\sfa_{\check k},\quad \sfa_{\check{\check k}}=\sfa_{\hat k},
\]
where the last two equations follow from equation~\eqref{eq:1}. In particular,
we have
\begin{gather}
\MOM_{\hat k}=-\POS_k,\quad\POS_{\hat k}=\MOM_k-\POS_k,\\
\MOM_{\check k}=\POS_k-\MOM_k,\quad\POS_{\check k}=-\MOM_k.
\end{gather}
Besides that, it will be also useful to use the notation
\begin{equation}
  \label{eq:16}
 \PERMUTE_{(kl\ldots m\hat k)}\equiv\ROTATE_k\PERMUTE_{(kl\ldots
  m)},\quad
\PERMUTE_{(kl\ldots m\check k)}\equiv\ROTATE_k^{-1}\PERMUTE_{(kl\ldots
  m)},
\end{equation}
where $(kl\ldots m)$ is the cyclic permutation
\[
(kl\ldots m)\colon k\mapsto l\mapsto\ldots\mapsto m\mapsto k.
\]
Equation~\eqref{eq:4} in this notation takes a rather compact form
\begin{gather}
\label{eq:16a}
  \PTOLEMY_{12}\PTOLEMY_{2\hat1}= \zeta\PERMUTE_{(12\hat1)}.
\end{gather}
\begin{remark}
One can derive the following symmetry property of the $T$-matrix:
\begin{multline*}
\PTOLEMY_{12}=\PTOLEMY_{12}\PTOLEMY_{2\hat1}\PTOLEMY_{2\hat1}^{-1}=
\zeta\PERMUTE_{(12\hat1)}\PTOLEMY_{2\hat1}^{-1}=
\PTOLEMY_{\hat1\hat2}^{-1}\zeta\PERMUTE_{(12\hat1)}\\=
\PTOLEMY_{\hat1\hat2}^{-1}\zeta\PERMUTE_{(\hat1\hat2\check1)}=
\PTOLEMY_{\hat1\hat2}^{-1}\PTOLEMY_{\hat1\hat2}\PTOLEMY_{\hat2\check1}=
\PTOLEMY_{\hat2\check1}.
\end{multline*}
\end{remark}

\subsubsection{Quantum functor}
\label{sec:eaaioiaue-ooieoid}

Quantum Teichm\"uller theory, being a three-dimensional TQFT, is defined by a \emph{quantum functor},
\[
\FUNCTOR\colon\gpd\to\End\vsp^{\otimes\NTR},
\]
which means that we have a operator valued function
\[
\FUNCTOR\colon\SDIT\times\SDIT\to \End\vsp^{\otimes\NTR},
\]
satisfying the equations
 \begin{equation}
  \label{eq:10}
 \FUNCTOR(\tau,\tau)=1,\quad \FUNCTOR(\tau,\tau')\FUNCTOR(\tau',\tau'')
\FUNCTOR(\tau'',\tau)\in\COMPLEXS\setminus\{0\},\quad \forall\tau,\tau',\tau''\in\SDIT,
\end{equation}
 \begin{equation}
\FUNCTOR(f(\tau),f(\tau'))= \FUNCTOR(\tau,\tau'),\qquad \forall
f\in\MCG,
\end{equation}
\begin{equation}
  \label{eq:17}
     \FUNCTOR(\tau,\rho_i\tau)\equiv\ROTATE_i,
\end{equation}
    \begin{equation}\label{tij}
\FUNCTOR(\tau,\omega_{i,j}\tau)\equiv\PTOLEMY_{ij},
    \end{equation}
\begin{equation}
\FUNCTOR(\tau,\tau^\sigma)\equiv\PERMUTE_\sigma,
\quad\forall\sigma\in\PGROUP_{\NTR},
    \end{equation}
where operator $\PERMUTE_\sigma$ is defined by equation~(\ref{eq:perm}).
Consistency of these equations is ensured by the consistency of equations~(\ref{eq:1})---(\ref{eq:4}) with relations~\eqref{eq:19}---\eqref{eq:20}.

A particular case of equation~(\ref{eq:10}) corresponds to
$\tau''=\tau$:
\begin{equation}
  \label{eq:18}
  \FUNCTOR(\tau,\tau')\FUNCTOR(\tau',\tau)\in\COMPLEXS\setminus\{0\}.
\end{equation}
As an example, we can calculate the operator
$\FUNCTOR(\tau,\omega_{i,j}^{-1}(\tau))$. Denoting $\tau'\equiv
\omega_{i,j}^{-1}(\tau)$ and using equation~(\ref{eq:18}), as well as definition~\eqref{tij}, we obtain
\begin{equation}
  \label{eq:11}
 \FUNCTOR(\tau,\omega_{i,j}^{-1}(\tau))=
\FUNCTOR(\omega_{i,j}(\tau'),\tau')\simeq
(\FUNCTOR(\tau',\omega_{i,j}(\tau')))^{-1}=\PTOLEMY_{ij}^{-1},
\end{equation}
where  $\simeq$ means equality up to a numerical multiplicative factor.

Projective unitary representation of the mapping class group $\MCG$ is realized as follows:
    \[
    \MCG\ni f\mapsto\FUNCTOR(\tau,f(\tau))\in
\End \vsp^{\otimes\NTR}.
    \]
    Indeed,
\[
\FUNCTOR(\tau,f(\tau))\FUNCTOR(\tau,h(\tau))=
\FUNCTOR(\tau,f(\tau))\FUNCTOR(f(\tau),f(h(\tau)))
\simeq\FUNCTOR(\tau,fh(\tau)).
\]

\subsection{Quantum discrete Liouville equation and quantum Teichm\"uller theory}
\label{sec:qdl-qtt}

Again, as in the classical case, we consider an annulus with $N$ marked points on each of its boundary components and choose a decorated ideal triangulation~$\tau_N$ shown in figure ~\ref{fig:11}.
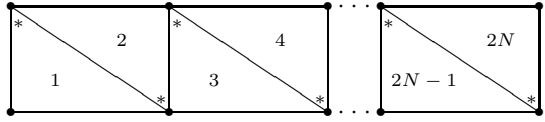
\begin{figure}[htb]
\centering
\begin{picture}(200,40)
\put(0,0){\begin{picture}(120,40)
\multiput(0,0)(0,40){2}{\line(1,0){120}}
\multiput(0,0)(60,0){3}{\line(0,1){40}}
\multiput(0,40)(60,0){2}{\line(3,-2){60}}
\multiput(0,0)(60,0){3}{\circle*{3}}
\multiput(0,40)(60,0){3}{\circle*{3}}
\scriptsize
\multiput(1,32)(60,0){2}{$*$}
\multiput(55,3)(60,0){2}{$*$}
\put(15,10){1}
\put(75,10){3}
\put(40,25){2}
\put(100,25){4}
\end{picture}}

\put(140,0){\begin{picture}(60,40)
\multiput(0,0)(0,40){2}{\line(1,0){60}}
\multiput(0,0)(60,0){2}{\line(0,1){40}}
\multiput(0,40)(60,0){1}{\line(3,-2){60}}
\multiput(0,0)(60,0){2}{\circle*{3}}
\multiput(0,40)(60,0){2}{\circle*{3}}
\scriptsize
\multiput(1,32)(60,0){1}{$*$}
\multiput(55,3)(60,0){1}{$*$}
\put(4,10){$2N-1$}
\put(40,25){$2N$}
\end{picture}}

\multiput(125,0)(5,0){3}{\circle*{1}}
\multiput(125,40)(5,0){3}{\circle*{1}}
\end{picture}
\caption{A decorated ideal triangulation of an annulus with $N$ marked points on each of the boundary components. The leftmost and the rightmost vertical edges are identified.}\label{fig:11}
\end{figure}
Equivalently, $\tau_N$ can be thought of as an infinite triangulated strip, where the triangles are numerated according to figure~\ref{fig:11}  with the periodicity condition
\[
\bar\tau_N(n+2N)=\bar\tau_N(n),\quad\forall n\in\INTEGERS.
\]
Recall that $D^{n/N}$, $n\in\INTEGERS$, is the mapping class twisting the top boundary component with respect to the bottom one  by the angle  $2\pi n/N$ so that the marked points on the top component are cyclically translated by $n$ spacings. When $n=N$ we get a pure Dehn twist
$D^{N/N}=D$. Clearly,
\[
D^{m/N}\circ D^{n/N}=D^{(m+n)/N}.
\]
From figure~\ref{fig:22}
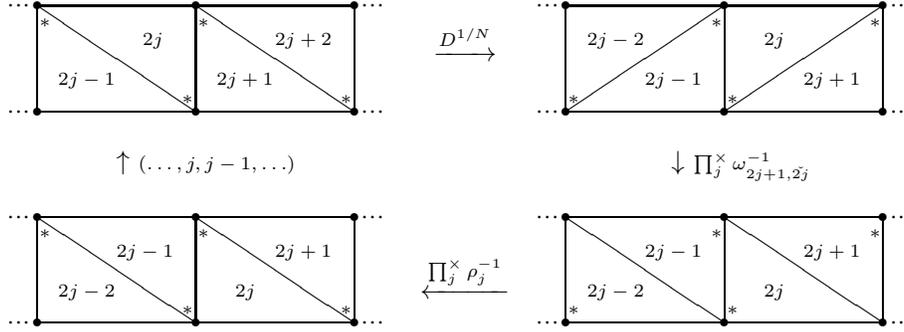
\begin{figure}[htb]
\centering
\begin{picture}(320,120)
\put(0,80){\begin{picture}(120,40)
\multiput(0,0)(0,40){2}{\line(1,0){120}}
\multiput(0,0)(60,0){3}{\line(0,1){40}}
\multiput(0,40)(60,0){2}{\line(3,-2){60}}
\multiput(0,0)(60,0){3}{\circle*{3}}
\multiput(0,40)(60,0){3}{\circle*{3}}
\scriptsize
\multiput(1,32)(60,0){2}{$*$}
\multiput(55,3)(60,0){2}{$*$}
\put(8,10){$2j-1$}
\put(68,10){$2j+1$}
\put(40,25){$2j$}
\put(90,25){$2j+2$}
\multiput(-4,0)(-3,0){3}{\circle*{1}}
\multiput(-4,40)(-3,0){3}{\circle*{1}}
\multiput(124,0)(3,0){3}{\circle*{1}}
\multiput(124,40)(3,0){3}{\circle*{1}}
\end{picture}}
\put(200,80){\begin{picture}(120,40)
\multiput(0,0)(0,40){2}{\line(1,0){120}}
\multiput(0,0)(60,0){3}{\line(0,1){40}}
\multiput(0,0)(60,0){2}{\line(3,2){60}}
\multiput(0,0)(60,0){3}{\circle*{3}}
\multiput(0,40)(60,0){3}{\circle*{3}}
\scriptsize
\multiput(1,3)(60,0){2}{$*$}
\multiput(55,32)(60,0){2}{$*$}
\put(8,25){$2j-2$}
\put(75,25){$2j$}
\put(30,10){$2j-1$}
\put(90,10){$2j+1$}
\multiput(-4,0)(-3,0){3}{\circle*{1}}
\multiput(-4,40)(-3,0){3}{\circle*{1}}
\multiput(124,0)(3,0){3}{\circle*{1}}
\multiput(124,40)(3,0){3}{\circle*{1}}
\end{picture}}
\put(0,0){\begin{picture}(120,40)
\multiput(0,0)(0,40){2}{\line(1,0){120}}
\multiput(0,0)(60,0){3}{\line(0,1){40}}
\multiput(0,40)(60,0){2}{\line(3,-2){60}}
\multiput(0,0)(60,0){3}{\circle*{3}}
\multiput(0,40)(60,0){3}{\circle*{3}}
\scriptsize
\multiput(1,32)(60,0){2}{$*$}
\multiput(55,3)(60,0){2}{$*$}
\put(8,10){$2j-2$}
\put(75,10){$2j$}
\put(30,25){$2j-1$}
\put(90,25){$2j+1$}
\multiput(-4,0)(-3,0){3}{\circle*{1}}
\multiput(-4,40)(-3,0){3}{\circle*{1}}
\multiput(124,0)(3,0){3}{\circle*{1}}
\multiput(124,40)(3,0){3}{\circle*{1}}
\end{picture}}
\put(200,0){\begin{picture}(120,40)
\multiput(0,0)(0,40){2}{\line(1,0){120}}
\multiput(0,0)(60,0){3}{\line(0,1){40}}
\multiput(0,40)(60,0){2}{\line(3,-2){60}}
\multiput(0,0)(60,0){3}{\circle*{3}}
\multiput(0,40)(60,0){3}{\circle*{3}}
\scriptsize
\multiput(1,3)(60,0){2}{$*$}
\multiput(55,32)(60,0){2}{$*$}
\put(8,10){$2j-2$}
\put(75,10){$2j$}
\put(30,25){$2j-1$}
\put(90,25){$2j+1$}
\multiput(-4,0)(-3,0){3}{\circle*{1}}
\multiput(-4,40)(-3,0){3}{\circle*{1}}
\multiput(124,0)(3,0){3}{\circle*{1}}
\multiput(124,40)(3,0){3}{\circle*{1}}
\end{picture}}
\put(150,90){$\stackrel{D^{1/N}}{\overrightarrow{\phantom{D^{1/N}}}}$}
\put(145,0){$\stackrel{\prod^{\times}_{j}
\rho^{-1}_j}{\overleftarrow{\phantom{\prod^{\times}_{j}
\rho^{-1}_j}}}$}
\put(240,58){$\downarrow$ {\scriptsize $\prod^{\times}_{j}
\omega_{2j+1,\check{2j}}^{-1}$}}
\put(30,58){$\uparrow$ {\scriptsize $(\ldots,j,j-1,\ldots)$}}
\end{picture}
\caption{Transformation $D^{1/N}$ as a morphism in the groupoid of ideal triangulations.}\label{fig:22}
\end{figure}
it follows that the quantum realization of the transformation $D^{1/N}$ has the form
\begin{equation}\label{eq:d-1/N}
\FUNCTOR\left(\tau_N,D^{1/N}(\tau_N)\right)\simeq \DEHN^{1/N}
\equiv
\zeta^{-N-6/N}\PERMUTE_{(\ldots j,{j+1}\ldots)}
\prod_{k=1}^{2N}\ROTATE_k
\prod_{l=1}^N\PTOLEMY_{2l+1,\check{2l}},
\end{equation}
where the normalization factor is chosen in accordance with the standard normalization of Dehn twists in quantum Teichm\"uller theory.
We define
\[
\DEHN^{n/N}\equiv(\DEHN^{1/N})^{n},\quad \forall n\in\INTEGERS.
\]
Consider the following faithful reducible realization of the observable algebra
$\OBALG$ in $L^2(\REALS^{2N})$:
\[
\INCL(\ff_j)=\left\{
\begin{array}{cl}
\MOM_j+\MOM_{j-1},&\mathrm{if}\ j=0\pmod2;\\
\POS_j+\POS_{j-1},&\mathrm{otherwise},
\end{array}\right.
\]
\[
\INCL(\SHFL)=\zeta^{N+6/N}e^{\IMUN2\pi\sum_{j=1}^N\MOM_{2j}\MOM_{2j+1}}
\prod_{k=1}^{2N}\ROTATE_k^{-1}
\PERMUTE_{(\ldots,l,{l-1},\ldots)}.
\]
\begin{theorem}[\cite{fk}]
One has the following equality
\begin{equation}\label{eq:u-d}
\INCL(\LC)=\DEHN^{-1/N}.
\end{equation}
\end{theorem}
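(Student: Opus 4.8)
The plan is to compute both sides in the coordinate realization on $L^2(\REALS^{2N})$ and match them factor by factor. First I would expand the left-hand side: by \eqref{eq:evol-op} together with the realization formulas one has $\INCL(\ff_{2j})=\MOM_{2j}+\MOM_{2j-1}$, so $\INCL(\LC)=\INCL(\SHFL)\prod_{j=1}^N\QDILOGI(\MOM_{2j}+\MOM_{2j-1})$. Substituting the given expression for $\INCL(\SHFL)$ and writing $E\equiv e^{\IMUN2\pi\sum_{j}\MOM_{2j}\MOM_{2j+1}}$ and $W\equiv\prod_{k}\ROTATE_k^{-1}\PERMUTE_{(\ldots,l,l-1,\ldots)}$, this yields
\[
\INCL(\LC)=\zeta^{N+6/N}\,E\,W\prod_{j=1}^N\QDILOGI(\MOM_{2j}+\MOM_{2j-1}).
\]

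Second, I would invert \eqref{eq:d-1/N}. The simplifying observation is that the factors $\PTOLEMY_{2l+1,\check{2l}}$ act on pairwise disjoint slot-pairs $\{2l,2l+1\}$ (modulo $2N$, with the index $l=N$ pairing slots $2N$ and $1$), hence commute, as do the $\ROTATE_k$; therefore taking the inverse simply reverses the three groups of factors. Using the definition \eqref{eq:t-in-t-of-psi} of $\PTOLEMY$ together with the check identities $\POS_{\check k}=-\MOM_k$, $\MOM_{\check k}=\POS_k-\MOM_k$, a short computation gives $\PTOLEMY_{2l+1,\check{2l}}=e^{-\IMUN2\pi\MOM_{2l+1}\MOM_{2l}}\QDILOG(\POS_{2l+1}+\POS_{2l})$, whence $\prod_l\PTOLEMY_{2l+1,\check{2l}}^{-1}=\bigl(\prod_l\QDILOGI(\POS_{2l+1}+\POS_{2l})\bigr)E$ and
\[
\DEHN^{-1/N}=\zeta^{N+6/N}\Bigl(\prod_{l=1}^N\QDILOGI(\POS_{2l+1}+\POS_{2l})\Bigr)E\,W.
\]

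Third --- and this is the heart of the argument --- the two expressions share the prefactor $\zeta^{N+6/N}$ and the common block $E\,W$, so \eqref{eq:u-d} reduces to the conjugation identity
\[
(E\,W)\,\QDILOGI(\MOM_{2j}+\MOM_{2j-1})\,(E\,W)^{-1}=\QDILOGI(\POS_{2j-1}+\POS_{2j-2}),
\]
which I would prove by tracking the argument. Conjugation by $W$ acts as $W\,\MOM_i\,W^{-1}=\MOM_{\check{i-1}}$, so $\MOM_{2j}+\MOM_{2j-1}\mapsto\MOM_{\check{2j-1}}+\MOM_{\check{2j-2}}=(\POS_{2j-1}-\MOM_{2j-1})+(\POS_{2j-2}-\MOM_{2j-2})$. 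Then conjugation by $E$ fixes every $\MOM_i$ and, by the Heisenberg relation \eqref{heisen}, shifts $\POS_{2j-1}\mapsto\POS_{2j-1}+\MOM_{2j-2}$ and $\POS_{2j-2}\mapsto\POS_{2j-2}+\MOM_{2j-1}$, so that the four $\MOM$-terms cancel in pairs and only $\POS_{2j-1}+\POS_{2j-2}=\POS_{2(j-1)+1}+\POS_{2(j-1)}$ survives. Relabeling $l=j-1$ and invoking the mod-$2N$ periodicity identifies the product over $j$ with the product over $l$, completing the proof.

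I would expect the main obstacle to be bookkeeping rather than any analytic difficulty: one must fix consistent conventions for the permutation action $\PERMUTE_\sigma\,\sfa_i\,\PERMUTE_\sigma^{-1}=\sfa_{\sigma(i)}$, for the reversal of order under inversion, and for the scalar normalization $\zeta^{\pm(N+6/N)}$, and then verify that the crucial $\MOM$-cancellation in the last step uses \emph{precisely} the coefficient $2\pi$ in $E$ matched against the normalization $(2\pi\IMUN)^{-1}$ of \eqref{heisen}. The wrap-around index $l=N$, pairing slots $2N$ and $1$, must also be treated carefully so that the two cyclic products genuinely coincide.
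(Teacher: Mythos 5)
Your proposal is correct, and every step checks out against the conventions actually fixed in the paper. Note that the paper states this theorem without proof, attributing it to \cite{fk}, so there is no in-text argument to compare with; your computation is a genuine reconstruction, and it is the natural direct verification. I verified the three pivots. (i) With $\POS_{\check k}=-\MOM_k$ and $\MOM_{\check k}=\POS_k-\MOM_k$, definition~\eqref{eq:t-in-t-of-psi} indeed gives $\PTOLEMY_{2l+1,\check{2l}}=e^{-\IMUN 2\pi\MOM_{2l+1}\MOM_{2l}}\QDILOG(\POS_{2l+1}+\POS_{2l})$, and since the slot pairs $\{2l,2l+1\}$, $l=1,\ldots,N$ (with $l=N$ pairing $2N$ and $1$), are pairwise disjoint mod $2N$, the exponentials assemble into $E^{-1}$ because $\sum_l\MOM_{2l+1}\MOM_{2l}=\sum_j\MOM_{2j}\MOM_{2j+1}$; inverting \eqref{eq:d-1/N} then yields exactly your $\zeta^{N+6/N}\,B^{-1}E\,W$ with $B=\prod_l\QDILOG(\POS_{2l+1}+\POS_{2l})$. (ii) The paper's convention~\eqref{eq:perm} gives $\PERMUTE_\sigma\,\sfa_i\,\PERMUTE_\sigma^{-1}=\sfa_{\sigma(i)}$, and the permutation in $\INCL(\SHFL)$ is $\sigma\colon l\mapsto l-1$, so $W\MOM_iW^{-1}=\MOM_{\check{i-1}}$ as you use; a confirming sanity check is that with these conventions $\INCL(\SHFL)\,\INCL(\ff_j)\,\INCL(\SHFL)^{-1}=(-1)^j\INCL(\ff_{j-1})$, i.e.\ the realization respects \eqref{eq:shiftFlip} (for odd $j$ one gets $\POS_{j}+\POS_{j-1}\mapsto -\MOM_{j-1}-\MOM_{j-2}$, accounting for the sign). (iii) The normalization of \eqref{heisen} gives $e^{\IMUN 2\pi\MOM'\MOM}\POS\,e^{-\IMUN 2\pi\MOM'\MOM}=\POS+\MOM'$ for $\MOM'$ in a different slot, so the four momentum terms cancel in pairs exactly as you say, $(EW)(\MOM_{2j}+\MOM_{2j-1})(EW)^{-1}=\POS_{2j-1}+\POS_{2j-2}$, and the wrap-around case $j=1$ works out via the term $\MOM_{2N}\MOM_{2N+1}=\MOM_{2N}\MOM_1$ of $E$, matching $l=j-1$ taken mod $N$. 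Since the factors $\QDILOGI(\MOM_{2j}+\MOM_{2j-1})$ are functions of momenta only (and those of $B$ of positions only), both products are order-independent, and the scalar prefactors $\zeta^{\pm(N+6/N)}$ cancel exactly, so the equality is exact and not merely projective, as the theorem asserts.
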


\subsubsection{Working in another triangulation}

Here we consider a special decorated ideal triangulation, where the Dehn twist $\DEHN$ is represented by a single $\PTOLEMY$-operator. It happens so that with respect to this triangulation all operators $\DEHN^{n/N}$, $0<n<N$, are represented in terms of product of only $N+1$ $\PTOLEMY$-operators.

Consider a decorated ideal triangulations of the form
\[
\tau_{n:N}\equiv \ELIM_{n+1}\circ\ELIM_{n+2}\circ\cdots\circ\ELIM_N
(\tau_N),\quad 1\le n<N,
\]
where transformations $\ELIM_n$ are defined in figure~\ref{fig:33}.
\begin{figure}[htb]
\centering
\begin{picture}(320,120)
\put(200,80){\begin{picture}(120,40)
\multiput(0,0)(0,40){2}{\line(1,0){120}}
\multiput(0,0)(60,0){3}{\line(0,1){40}}
\multiput(0,0)(60,0){2}{\line(3,2){60}}
\multiput(0,0)(60,0){3}{\circle*{3}}
\multiput(0,40)(60,0){3}{\circle*{3}}
\scriptsize
\multiput(1,3)(60,0){2}{$*$}
\multiput(55,32)(60,0){2}{$*$}
\put(5,25){$2n-2$}
\put(75,25){$2n$}
\put(30,10){$2n-1$}
\put(97,10){$1$}
\multiput(-4,0)(-3,0){3}{\circle*{1}}
\multiput(-4,40)(-3,0){3}{\circle*{1}}
\multiput(124,0)(3,0){3}{\circle*{1}}
\multiput(124,40)(3,0){3}{\circle*{1}}
\end{picture}}
\put(0,80){\begin{picture}(120,40)
\multiput(0,0)(0,40){2}{\line(1,0){120}}
\multiput(0,0)(120,0){2}{\line(0,1){40}}
\multiput(0,0)(60,0){2}{\line(3,2){60}}
\put(0,0){\line(3,1){120}}
\multiput(0,0)(60,0){3}{\circle*{3}}
\multiput(0,40)(60,0){3}{\circle*{3}}
\scriptsize
\multiput(1,3)(60,0){1}{$*$}
\multiput(115,32)(60,0){1}{$*$}
\put(58,35){$*$}
\put(58,2){$*$}
\put(5,25){$2n-2$}
\put(45,5){$2n$}
\put(46,25){$2n-1$}
\put(97,10){$1$}
\multiput(-4,0)(-3,0){3}{\circle*{1}}
\multiput(-4,40)(-3,0){3}{\circle*{1}}
\multiput(124,0)(3,0){3}{\circle*{1}}
\multiput(124,40)(3,0){3}{\circle*{1}}
\end{picture}}
\put(0,0){\begin{picture}(120,40)
\multiput(0,0)(0,40){2}{\line(1,0){120}}
\multiput(0,0)(120,0){2}{\line(0,1){40}}
\put(0,0){\line(3,1){120}}
\qbezier(0,40)(10,10)(120,40)
\qbezier(0,0)(110,30)(120,0)
\multiput(0,0)(60,0){3}{\circle*{3}}
\multiput(0,40)(60,0){3}{\circle*{3}}
\scriptsize
\multiput(1,3)(60,0){1}{$*$}
\multiput(115,32)(60,0){1}{$*$}
\put(58,35){$*$}
\put(58,2){$*$}
\put(5,15){$2n-2$}
\put(80,5){$2n$}
\put(23,30){$2n-1$}
\put(110,15){$1$}
\multiput(-4,0)(-3,0){3}{\circle*{1}}
\multiput(-4,40)(-3,0){3}{\circle*{1}}
\multiput(124,0)(3,0){3}{\circle*{1}}
\multiput(124,40)(3,0){3}{\circle*{1}}
\end{picture}}
\put(143,93){$\stackrel{
\omega_{\hat{2n},2n-1}}{\overleftarrow{\phantom{\omega_{2n,2n-1}}}}$}
\put(30,57){$\downarrow$ {\scriptsize$
\omega_{2n-1,\check{2n-2}}^{-1}\times
\omega_{2n,\check{1}}^{-1}$}}
\put(160,55){\scriptsize$\ELIM_n$}\put(180,70){\vector(-2,-1){40}}
\end{picture}
\caption{Transformation $\ELIM_n$
reduces  $\tau_{n}$ to $\tau_{n-1}$ and two triangles attached to the boundary.}\label{fig:33}
\end{figure}
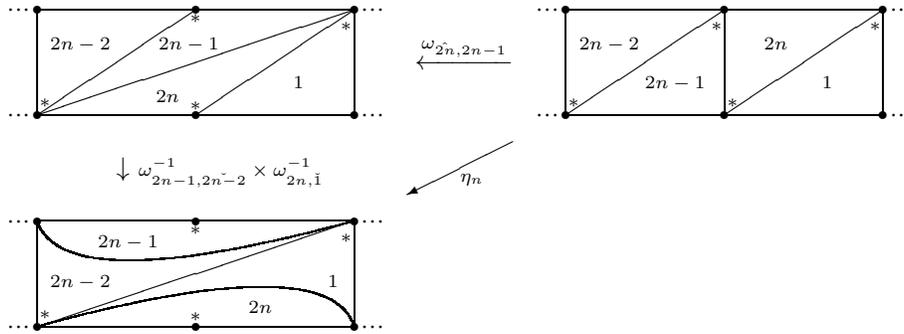
Transformation $\ELIM_n$ acts non-trivially only in the minimal annular part of $\tau_{n:N}$, which by itself is nothing else but the triangulated annulus $\tau_n$.

Quantum realizations of any element $\varphi$ of the (extended) mapping class group with respect to decorated ideal triangulations $\tau$ and $\tau'$ are conjugated to each other by operator
 $\FUNCTOR\left(\tau,\tau'\right)$:
\begin{multline*}
\FUNCTOR\left(\tau',\varphi(\tau')\right)\simeq
\FUNCTOR\left(\tau',\varphi(\tau)\right)
\FUNCTOR\left(\varphi(\tau),\varphi(\tau')\right)\\
\simeq\FUNCTOR\left(\tau',\tau\right)
\FUNCTOR\left(\tau,\varphi(\tau)\right)
\FUNCTOR\left(\tau,\tau'\right)=
\Ad\left(\FUNCTOR\left(\tau',\tau\right)\right)
\FUNCTOR\left(\tau,\varphi(\tau)\right)
\end{multline*}
In particular, from figure~\ref{fig:33} it follows that
\begin{equation}\label{eq:wn}
\FUNCTOR(\tau_N,\tau_{n:N})\simeq\SIM(n)\equiv\prod_{N\ge j> n}
(\PTOLEMY_{\hat{2j},2j-1}\PTOLEMY_{2j-1,\check{2j-2}}^{-1}
\PTOLEMY_{2j,\check{1}}^{-1}).
\end{equation}
We would like to find the realization of the transformations
$D^{n/N}$ with respect to decorated ideal triangulation~$\tau_{1:N}$:
\[
\FUNCTOR\left(\tau_{1:N},D^{n/N}(\tau_{1:N})\right)\simeq
\tilde\DEHN^{n/N}\equiv\SIM(1)^{-1}\DEHN^{n/N}\SIM(1)
\]
\begin{proposition}\label{prop:1}
Formula
\begin{multline}\label{eq:tilde-n}
\SIM(n)^{-1}\DEHN^{-1/N}\SIM(n)=\zeta^{n-1+6/N}\prod_{j=1}^{n-1}
\PTOLEMY_{2j+1,\check{2j}}^{-1}\prod_{N>k>n}
\PTOLEMY_{2N-1,2k-1}\\
\times\PTOLEMY_{2N-1,\check{2n}}\PTOLEMY_{1,2N-1}^{-1}
\PTOLEMY_{\hat{2n-1},2n}\prod_{l=1}^{2n-2}\ROTATE_l^{-1}\
\PERMUTE_{\prod_{m=n}^{N}(2m-1,2m)}
\PERMUTE_{(\ldots,s,s-1,\ldots)}
\end{multline}
holds true for $1\le n<N$. In particular, when $n=1$,
\begin{equation}\label{eq:tilde-1}
\tilde\DEHN^{-1/N}=\zeta^{6/N}
\prod_{N>j>1}\PTOLEMY_{2N-1,2j-1}\PTOLEMY_{2N-1,\check2}
\PTOLEMY_{1,2N-1}^{-1}\PTOLEMY_{\hat1,2}\PERMUTE_{(\ldots,2k+1,2k-1,\ldots)}
\end{equation}
\end{proposition}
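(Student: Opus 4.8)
The plan is to establish \eqref{eq:tilde-n} by descending induction on $n$, exploiting that the conjugating operator $\SIM(n)$ of \eqref{eq:wn} is assembled from the elementary blocks
\[
B_j\equiv\PTOLEMY_{\hat{2j},2j-1}\PTOLEMY_{2j-1,\check{2j-2}}^{-1}\PTOLEMY_{2j,\check1}^{-1},
\]
the $j=n+1$ block splitting off as $\SIM(n)=\SIM(n+1)\,B_{n+1}$ (for the product ordering of \eqref{eq:wn}), so that
\[
\SIM(n)^{-1}\DEHN^{-1/N}\SIM(n)=\Ad\!\bigl(B_{n+1}^{-1}\bigr)\bigl(\SIM(n+1)^{-1}\DEHN^{-1/N}\SIM(n+1)\bigr).
\]
In this way the proposition reduces to two tasks: an anchor, and a single inductive step turning the instance of the right-hand side of \eqref{eq:tilde-n} with parameter $n+1$ into the one with parameter $n$ under conjugation by $B_{n+1}^{-1}$.

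For the anchor I take $n=N$, where $\SIM(N)=1$ and the quantity is $\DEHN^{-1/N}$ itself, and read \eqref{eq:tilde-n} with the empty products omitted. Starting from the definition \eqref{eq:d-1/N}, I would carry the rotations $\prod_k\ROTATE_k^{-1}$ rightward through the $\PTOLEMY$-factors, converting the resulting decorations by means of \eqref{eq:14} and the symmetry $\PTOLEMY_{12}=\PTOLEMY_{\hat2\check1}$ from the remark after \eqref{eq:16a}, and then fold the cyclic permutation together with the leftover rotations into the standard ordering, keeping every decoration among the three admissible types via $\ROTATE^3=1$ of \eqref{eq:1}. This identifies $\DEHN^{-1/N}$ with the $n=N$ form of \eqref{eq:tilde-n}, fixing in particular the power of $\zeta$ and the two permutation prefactors.

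The inductive step is the substance of the argument. Writing $B_{n+1}^{-1}=\PTOLEMY_{2n+2,\check1}\PTOLEMY_{2n+1,\check{2n}}\PTOLEMY_{\hat{2n+2},2n+1}^{-1}$, I would commute its three $\PTOLEMY$-factors through the standard product at level $n+1$. Factors on disjoint indices pass freely; where an index is shared, the pentagon relation \eqref{eq:2} trades a pair of $\PTOLEMY$'s for a triple (or conversely), and the compact inversion identity $\PTOLEMY_{12}\PTOLEMY_{2\hat1}=\zeta\PERMUTE_{(12\hat1)}$ of \eqref{eq:16a} collapses a matched pair into a rotation-decorated permutation. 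Each such collapse supplies exactly the factor $\zeta^{-1}$ lowering the exponent from $n$ to $n-1$ and feeds one further transposition into $\PERMUTE_{\prod_m(2m-1,2m)}$. The moves must be organised so that, after all cancellations, the surviving operators are precisely $\prod_{j=1}^{n-1}\PTOLEMY_{2j+1,\check{2j}}^{-1}$, the chain $\prod_{N>k>n}\PTOLEMY_{2N-1,2k-1}$, the three distinguished factors $\PTOLEMY_{2N-1,\check{2n}}$, $\PTOLEMY_{1,2N-1}^{-1}$, $\PTOLEMY_{\hat{2n-1},2n}$, and the rotation tail $\prod_{l=1}^{2n-2}\ROTATE_l^{-1}$.

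The one genuine obstacle is the combinatorial tracking of indices and decorations: pushing a $\PTOLEMY$-operator past a permutation $\PERMUTE$ relabels its tensor slots, pushing it past a rotation $\ROTATE$ toggles the hat/check decoration through \eqref{eq:14}, and the pentagon relation applies only in the shared-middle index pattern of \eqref{eq:2}; keeping these three effects mutually consistent along the whole chain is where the care lies, while everything else is a routine use of the semi-symmetric $T$-matrix relations \eqref{eq:1}--\eqref{eq:4}. Finally the case $n=1$ is pure specialisation: the products $\prod_{j=1}^{0}$ and $\prod_{l=1}^{0}$ are empty, the factors become $\PTOLEMY_{2N-1,\check2}$ and $\PTOLEMY_{\hat1,2}$, and the two permutations combine, $\PERMUTE_{\prod_{m=1}^{N}(2m-1,2m)}\PERMUTE_{(\ldots,s,s-1,\ldots)}=\PERMUTE_{(\ldots,2k+1,2k-1,\ldots)}$ (using that $\PERMUTE_\sigma\PERMUTE_\tau=\PERMUTE_{\sigma\tau}$, with the even indices fixed and the odd ones cycled), which reproduces \eqref{eq:tilde-1}.
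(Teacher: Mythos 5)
Your overall strategy coincides with the paper's own proof: the same descending induction on $n$, the same factorisation $\SIM(n)=\SIM(n+1)B_{n+1}$ read off from \eqref{eq:wn}, the same conjugation by $B_{n+1}^{-1}=\PTOLEMY_{2n+2,\check1}\PTOLEMY_{2n+1,\check{2n}}\PTOLEMY_{\hat{2n+2},2n+1}^{-1}$ driven by the pentagon relation \eqref{eq:2} and the inversion relation \eqref{eq:16a}, and your $n=1$ specialisation (including the computation combining the two permutations into $\PERMUTE_{(\ldots,2k+1,2k-1,\ldots)}$) is correct. The genuine gap is your anchor. You base the induction at $n=N$, a case the proposition does not assert, and you propose to verify it from the definition \eqref{eq:d-1/N} purely by carrying rotations through the $\PTOLEMY$-factors via \eqref{eq:14}, invoking the symmetry $\PTOLEMY_{12}=\PTOLEMY_{\hat2\check1}$, $\ROTATE^3=1$, and refolding permutations. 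These moves cannot suffice: each of them preserves both the number of $\PTOLEMY$-factors and the scalar prefactor, whereas $\DEHN^{-1/N}$ contains $N$ such factors with prefactor $\zeta^{N+6/N}$, while the $n=N$ reading of \eqref{eq:tilde-n} contains $N+2$ of them ($\prod_{j=1}^{N-1}\PTOLEMY_{2j+1,\check{2j}}^{-1}$ together with $\PTOLEMY_{2N-1,\check{2N}}\PTOLEMY_{1,2N-1}^{-1}\PTOLEMY_{\hat{2N-1},2N}$) with prefactor $\zeta^{N-1+6/N}$. After cancelling common factors, your anchor is equivalent to the identity
\begin{equation*}
\zeta\,\PTOLEMY_{1,\check{2N}}^{-1}\ROTATE_{2N-1}^{-1}\ROTATE_{2N}^{-1}
=\PTOLEMY_{2N-1,\check{2N}}\,\PTOLEMY_{1,2N-1}^{-1}\,\PTOLEMY_{\hat{2N-1},2N}\,\PERMUTE_{(2N-1,2N)},
\end{equation*}
which requires the inversion relation \eqref{eq:16a} read backwards (creating a $\PTOLEMY$-pair out of $\zeta\PERMUTE_{(12\hat1)}$) together with a pentagon move --- exactly the kind of explicit manipulation your sketch omits.

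Relatedly, even granting the $n=N$ instance, your generic inductive step does not apply verbatim to the passage $N\to N-1$: at $n+1=N$ the factor $\PTOLEMY_{2N-1,\check{2n+2}}$ degenerates to $\PTOLEMY_{2N-1,\check{2N}}$ and the chain $\prod_{N>k>n+1}\PTOLEMY_{2N-1,2k-1}$ empties, so this step needs separate treatment. This is precisely why the paper anchors instead at $n=N-1$, verifying that case directly from \eqref{eq:d-1/N} by cancelling one $\PTOLEMY$-pair, applying the pentagon once and the inversion relation twice. To repair your argument, replace your anchor by this $n=N-1$ computation (or prove the displayed identity above and check the degenerate step separately); the remainder of your outline then tracks the paper's proof, though be aware that the ``organisation of moves'' you defer in the inductive step is where essentially all the work lies --- the paper's step uses four pentagon applications, reshuffles of commuting factors, and one final inversion to produce the net factor $\zeta^{-1}$.
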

\begin{proof}
The proof is by induction in $n$. First, let us check that equation~\eqref{eq:tilde-n} holds true at $n=N-1$. We write
\begin{multline*}
\zeta^{-N-6/N}\SIM(N-1)^{-1}\DEHN^{-1/N}\SIM(N-1)=
\underline{\PTOLEMY_{2N,\check{1}}}
\PTOLEMY_{2N-1,\check{2N-2}}\underline{\PTOLEMY_{\hat{2N-1},2N}^{-1}
\PTOLEMY_{2N,\check1}^{-1}}\\
\times\prod_{j=1}^{N-2}\PTOLEMY_{2j+1,\check{2j}}^{-1}
\PTOLEMY_{\check{2N-2},\hat{2N-3}}^{-1}
\PTOLEMY_{\check{2N},\hat{2N-1}}^{-1}
\prod_{k=1}^{2N}\ROTATE_k^{-1}\
\PERMUTE_{(\ldots,l,{l-1},\ldots)}
\end{multline*}
where we canceled one pair of $\PTOLEMY$-operators.
Applying now the pentagon equation to the underlined fragment and slightly reshuffling the commuting terms, we create another pair of $\PTOLEMY$-operators:
\begin{multline*}
=
\prod_{j=1}^{N-2}\PTOLEMY_{2j+1,\check{2j}}^{-1}
\PTOLEMY_{2N-1,\check{2N-2}}\PTOLEMY_{\hat{2N-1},\check1}^{-1}
\PTOLEMY_{\hat{2N-3},2N-2}\\
\times
\PTOLEMY_{\hat{2N-3},2N-2}^{-1}\PTOLEMY_{\check{2N-2},\hat{2N-3}}^{-1}
\PTOLEMY_{\hat{2N-1},2N}^{-1}\PTOLEMY_{\check{2N},\hat{2N-1}}^{-1}
\prod_{k=1}^{2N}\ROTATE_k^{-1}\
\PERMUTE_{(\ldots,l,{l-1},\ldots)}
\end{multline*}
finally, applying twice the inversion relation to eliminate four
$\PTOLEMY$'s in the second line, we arrive at the desired result.

Now, assuming that formula~\eqref{eq:tilde-n} holds true for
 $1<n+1<N$, we prove that it holds also for $n$:
\begin{multline*}
\zeta^{-n-6/N}\Ad\left(\SIM(n)^{-1}\right)\DEHN^{-1/N}\\
=\zeta^{-n-6/N}\Ad(\PTOLEMY_{2n+2,\check{1}}
\PTOLEMY_{2n+1,\check{2n}}\PTOLEMY_{\hat{2n+2},2n+1}^{-1})
\circ\Ad\left(\SIM(n+1)^{-1}\right)\DEHN^{-1/N}\\
=\PTOLEMY_{2n+2,\check{1}}
\underline{\PTOLEMY_{2n+1,\check{2n}}\PTOLEMY_{\hat{2n+2},2n+1}^{-1}
\PTOLEMY_{2n+1,\check{2n}}^{-1}}\prod_{j=1}^{n-1}
\PTOLEMY_{2j+1,\check{2j}}^{-1}\\
\times\prod_{N>k>n+1}
\PTOLEMY_{2N-1,2k-1}\PTOLEMY_{2N-1,\check{2n+2}}\PTOLEMY_{1,2N-1}^{-1}
\PTOLEMY_{\hat{2n+1},2n+2}\\
\times\PTOLEMY_{\hat{2n+2},\check{2n}}
\PTOLEMY_{\check{2n},\hat{2n-1}}^{-1}
\PTOLEMY_{2n+2,\check{2N-1}}^{-1}
\prod_{l=1}^{2n}\ROTATE_l^{-1}\
\PERMUTE_{\prod_{m=n+1}^{N}(2m-1,2m)}
\PERMUTE_{(\ldots,s,s-1,\ldots)}
\end{multline*}
applying the pentagon relation to the underlined fragment,
\begin{multline*}
=\prod_{j=1}^{n-1}
\PTOLEMY_{2j+1,\check{2j}}^{-1}\prod_{N>k>n+1}
\PTOLEMY_{2N-1,2k-1}
\\
\times\PTOLEMY_{2n+2,\check{1}}
\PTOLEMY_{\hat{2n+2},\check{2n}}^{-1}
\underline{\PTOLEMY_{\hat{2n+1},2n+2}^{-1}
\PTOLEMY_{2n+2,\check{2N-1}}}\PTOLEMY_{1,2N-1}^{-1}
\underline{\PTOLEMY_{\hat{2n+1},2n+2}}\\
\times\PTOLEMY_{\hat{2n+2},\check{2n}}
\PTOLEMY_{\check{2n},\hat{2n-1}}^{-1}
\PTOLEMY_{2n+2,\check{2N-1}}^{-1}
\prod_{l=1}^{2n}\ROTATE_l^{-1}\
\PERMUTE_{\prod_{m=n+1}^{N}(2m-1,2m)}
\PERMUTE_{(\ldots,s,s-1,\ldots)}
\end{multline*}
again applying the pentagon equation and hiding one $\PTOLEMY$ into the product over $k$,
\begin{multline*}
=\prod_{j=1}^{n-1}
\PTOLEMY_{2j+1,\check{2j}}^{-1}\prod_{N>k>n}
\PTOLEMY_{2N-1,2k-1}
\PTOLEMY_{2n+2,\check{1}}
\underline{\PTOLEMY_{{2n},{2n+2}}^{-1}
\PTOLEMY_{2n+2,\check{2N-1}}}
\underline{\underline{\PTOLEMY_{1,2N-1}^{-1}}}\\
\times
\underline{\PTOLEMY_{{2n},{2n+2}}}
\PTOLEMY_{\check{2n},\hat{2n-1}}^{-1}
\underline{\underline{\PTOLEMY_{2N-1,\check{2n+2}}^{-1}}}
\prod_{l=1}^{2n}\ROTATE_l^{-1}\
\PERMUTE_{\prod_{m=n+1}^{N}(2m-1,2m)}
\PERMUTE_{(\ldots,s,s-1,\ldots)}
\end{multline*}
two more pentagon relations with subsequent cancelation of two pairs of $\PTOLEMY$'s
\begin{multline*}
=\prod_{j=1}^{n-1}
\PTOLEMY_{2j+1,\check{2j}}^{-1}\prod_{N>k>n}
\PTOLEMY_{2N-1,2k-1}\PTOLEMY_{{2N-1},\check{2n}}
\PTOLEMY_{1,2N-1}^{-1}
\PTOLEMY_{\check{2n},\hat{2n-1}}^{-1}
\\
\times
\prod_{l=1}^{2n}\ROTATE_l^{-1}\
\PERMUTE_{\prod_{m=n+1}^{N}(2m-1,2m)}
\PERMUTE_{(\ldots,s,s-1,\ldots)}
\end{multline*}
finally, application of the inversion relation to the last $\PTOLEMY$ gives
 equation~\eqref{eq:tilde-n}. Formula~\eqref{eq:tilde-1} is a particular case of \eqref{eq:tilde-n} corresponding to $n=1$.
\end{proof}
\begin{proposition}\label{prop:2}
Formula
\begin{multline}\label{eq:power-any}
\tilde\DEHN^{(n-N)/N}=\zeta^{6(N-n)/N}
\prod_{n\ge j>1}\PTOLEMY_{2n+1,2j-1}\PTOLEMY_{2n+1,\check2}
\PTOLEMY_{1,2n+1}^{-1}\PTOLEMY_{\hat1,2}\\
\times
\prod_{n<k<N}\PTOLEMY_{1,2k+1}^{-1}
\left(\PERMUTE_{(\ldots,2l-1,2l+1,\ldots)}\right)^n
\end{multline}
holds true for $1\le n<N$, while
\begin{equation}\label{eq:power-N}
\tilde\DEHN=\zeta^{-6}\PTOLEMY_{\check1,\hat2}^{-1}.
\end{equation}
\end{proposition}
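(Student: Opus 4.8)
The plan is to prove \eqref{eq:power-any} by \emph{downward} induction on $n$, taking $n=N-1$ as the base case and feeding it from formula \eqref{eq:tilde-1} of Proposition~\ref{prop:1}, and then to extract \eqref{eq:power-N} as the terminal ``$n=0$'' step of the very same recursion. Throughout I use that $\tilde\DEHN^{k/N}=(\tilde\DEHN^{1/N})^{k}$, so that all the operators $\tilde\DEHN^{k/N}$ commute and their exponents add.

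\emph{Base case.} At $n=N-1$ one has $(n-N)/N=-1/N$; the product $\prod_{n<k<N}$ is empty, the edge $2n+1$ equals $2N-1$, and the permutation factor is $(\PERMUTE_{(\ldots,2l-1,2l+1,\ldots)})^{N-1}$. Since the cyclic shift of the $N$ odd sites $\{1,3,\dots,2N-1\}$ has order $N$, its $(N-1)$st power is its inverse $\PERMUTE_{(\ldots,2k+1,2k-1,\ldots)}$. Hence \eqref{eq:power-any} at $n=N-1$ is literally \eqref{eq:tilde-1}, which is already established.

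\emph{Inductive step.} Assuming \eqref{eq:power-any} for $n+1$ with $1\le n<N-1$, I would write
\[
\tilde\DEHN^{(n-N)/N}=\tilde\DEHN^{-1/N}\,\tilde\DEHN^{(n+1-N)/N},
\]
insert \eqref{eq:tilde-1} for the first factor and the induction hypothesis for the second, and observe that the scalars combine as $\zeta^{6/N}\zeta^{6(N-n-1)/N}=\zeta^{6(N-n)/N}$, the prefactor of the target. Next I would push the trailing permutation $\PERMUTE_{(\ldots,2k+1,2k-1,\ldots)}$ of \eqref{eq:tilde-1} to the right through the $\PTOLEMY$-operators of the hypothesis; by the equivariance $\PERMUTE_\sigma\PTOLEMY_{ab}\PERMUTE_\sigma^{-1}=\PTOLEMY_{\sigma(a)\sigma(b)}$ this applies the inverse odd-site shift to every index (fixing even indices), in particular replacing the leading edge $2n+3$ by $2n+1$, while the two permutation contributions merge into $(\PERMUTE_{(\ldots,2l-1,2l+1,\ldots)})^{n}$, exactly as required. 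Crucially, the same shift wraps the edge $1$ around to $2N-1$, bringing the hypothesis operators into contact with the $\PTOLEMY_{2N-1,\cdot}$ operators supplied by \eqref{eq:tilde-1}; it then remains to collapse this product to the target by repeated use of the pentagon relation~\eqref{eq:22}, the inversion relation~\eqref{eq:20}, and commutativity on disjoint tensor factors, absorbing the residual $\PTOLEMY_{1,2k+1}^{-1}$ into $\prod_{n<k<N}$. This index accounting---choosing the order of pentagon and inversion moves so that precisely one $\PTOLEMY$-pair survives each stage---is the main obstacle, and is the exact analogue of the cancellations already carried out in the proof of Proposition~\ref{prop:1}.

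\emph{The full twist.} Finally, running the recursion one step beyond $n=1$, the identity $(1-N)/N+(-1/N)=-1$ gives $\tilde\DEHN^{-1}=\tilde\DEHN^{-1/N}\tilde\DEHN^{(1-N)/N}$, i.e.\ the formal ``$n=0$'' instance. Here the products $\prod_{n\ge j>1}$ and $\prod_{n<k<N}$ and the permutation power all degenerate, and the cyclic wrap-around of the indices forces all but one $\PTOLEMY$ to cancel, collapsing the answer to $\tilde\DEHN^{-1}=\zeta^{6}\PTOLEMY_{\check1,\hat2}$ and hence, upon inversion, to \eqref{eq:power-N}. Equivalently, \eqref{eq:power-N} may be read off geometrically: $\tau_{1:N}$ is arranged so that the full Dehn twist $\DEHN$ alters it by a single flip of the core quadrilateral, whence $\FUNCTOR(\tau_{1:N},\DEHN(\tau_{1:N}))\simeq\PTOLEMY_{\check1,\hat2}^{-1}$, with the decorations and the factor $\zeta^{-6}$ fixed by the normalization in \eqref{eq:d-1/N}. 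I expect this final collapse, together with the index tracking of the inductive step, to absorb essentially all the effort.
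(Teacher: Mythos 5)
Your scaffolding coincides exactly with the paper's proof: the same downward induction with base case $n=N-1$, which you correctly identify with \eqref{eq:tilde-1} (via the observation that the $(N-1)$-st power of the odd-index shift is its inverse); the same factorization $\tilde\DEHN^{(n-N)/N}=\tilde\DEHN^{-1/N}\tilde\DEHN^{(n+1-N)/N}$; the same conjugation of the induction-hypothesis operators by the shift permutation (which indeed sends $2n+3\mapsto 2n+1$, wraps $1\mapsto 2N-1$, fixes even indices, and merges the permutation powers into $n$); the same scalar bookkeeping; and the same derivation of \eqref{eq:power-N} by running the recursion one step further. All of these checks are correct.

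The problem is that what you set aside as ``the main obstacle'' --- exhibiting the order of moves under which the long word in $\PTOLEMY$'s collapses to the right-hand side of \eqref{eq:power-any} --- is the entire mathematical content of the proposition, and your proposal gives no argument for it beyond asserting that it is ``the exact analogue'' of Proposition~\ref{prop:1}. It is not: the collapse here is a different and quite specific chain, namely one pentagon applied to the fragment $\PTOLEMY_{\hat1,2}\,\PTOLEMY_{2n+1,1}\,\PTOLEMY_{2n+1,\check2}$, then $n+1$ consecutive pentagons starting from the product over $l$ (each absorbing one factor $\PTOLEMY_{2N-1,2l-1}$), then one more pentagon, then $N-n-2$ further ones; unlike Proposition~\ref{prop:1}, the inductive step needs no inversion relations and no creation of auxiliary $\PTOLEMY$-pairs, and finding this chain together with the decorated-index accounting at each stage is precisely the proof. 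The same criticism applies to the terminal step: the paper substitutes the $n=1$ case and \eqref{eq:tilde-1}, lets the two oppositely ordered products $\prod\PTOLEMY_{1,2k+1}^{\mp1}$ telescope away, and finishes with the relation \eqref{eq:16a}, one pentagon, and the symmetry $\PTOLEMY_{12}=\PTOLEMY_{\hat2\check1}$ to reach $\zeta^{-6}\tilde\DEHN^{-1}=\PTOLEMY_{\check1\hat2}$. Your fallback geometric argument cannot substitute for this computation: the quantum functor is projective (equations \eqref{eq:10} and \eqref{eq:d-1/N} hold only up to scalars), so reading the single flip off the triangulation $\tau_{1:N}$ determines $\tilde\DEHN$ only up to a multiplicative constant and cannot produce the factor $\zeta^{-6}$, which is part of what \eqref{eq:power-N} asserts.
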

\begin{proof}
Again we use induction in
$n$. Equation~\eqref{eq:power-any} at $n=N-1$ coincides with equation~\eqref{eq:tilde-1}. Assuming the statement is true for
$1<n+1<N$, we derive it for $n$:
\begin{multline*}
\zeta^{6(n-N)/N}\tilde\DEHN^{(n-N)/N}=
\zeta^{6(n-N)/N}\tilde\DEHN^{-1/N}\tilde\DEHN^{(n+1-N)/N}=
\prod_{N>j>1}\PTOLEMY_{2N-1,2j-1}\\
\times
\PTOLEMY_{2N-1,\check2}
\PTOLEMY_{1,2N-1}^{-1}\underline{\PTOLEMY_{\hat1,2}}
\prod_{n\ge l>1}\PTOLEMY_{2n+1,2l-1}
\underline{\PTOLEMY_{2n+1,1}\PTOLEMY_{2n+1,\check2}}
\PTOLEMY_{2N-1,2n+1}^{-1}\\
\times\PTOLEMY_{\hat{2N-1},2}
\prod_{n+1<k<N}\PTOLEMY_{2N-1,2k-1}^{-1}
\left(\PERMUTE_{(\ldots,2m-1,2m+1,\ldots)}\right)^{n}
\end{multline*}
applying the pentagon relation to the underlined fragment with subsequent reshuffling of commuting terms,
\begin{multline*}
=\prod_{N>j>n+1}\PTOLEMY_{2N-1,2j-1}
\underline{\PTOLEMY_{2N-1,2n+1}}
\prod_{n\ge l>1}
(\underline{\PTOLEMY_{2N-1,2l-1}\PTOLEMY_{2n+1,2l-1}})\\
\times
\underline{\PTOLEMY_{2N-1,\check2}\PTOLEMY_{2n+1,\check2}
\PTOLEMY_{1,2N-1}^{-1}
\PTOLEMY_{2N-1,2n+1}^{-1}}\PTOLEMY_{\hat1,2}\PTOLEMY_{\hat{2N-1},2}\\
\times
\prod_{n+1<k<N}\PTOLEMY_{2N-1,2k-1}^{-1}
\left(\PERMUTE_{(\ldots,2m-1,2m+1,\ldots)}\right)^{n}
\end{multline*}
applying consecutively the pentagon relation $n+1$ times, starting from the product on $l$
\begin{multline*}
=\prod_{N>j>n+1}\PTOLEMY_{2N-1,2j-1}
\prod_{n\ge l>1}\PTOLEMY_{2n+1,2l-1}\PTOLEMY_{2n+1,\check2}
\PTOLEMY_{1,2n+1}^{-1}
\\
\times
\underline{\PTOLEMY_{1,2N-1}^{-1}
\PTOLEMY_{\hat2,1}\PTOLEMY_{\hat{2},2N-1}}
\prod_{n+1<k<N}\PTOLEMY_{2N-1,2k-1}^{-1}
\left(\PERMUTE_{(\ldots,2m-1,2m+1,\ldots)}\right)^{n}
\end{multline*}
one more pentagon relation with little reshuffling,
\begin{multline*}
=
\prod_{n\ge j>1}\PTOLEMY_{2n+1,2j-1}\PTOLEMY_{2n+1,\check2}
\PTOLEMY_{1,2n+1}^{-1}\PTOLEMY_{\hat1,2}
\\
\times
\prod_{N>l>n+1}\PTOLEMY_{2N-1,2l-1}\PTOLEMY_{1,2N-1}^{-1}
\prod_{n+1<k<N}\PTOLEMY_{2N-1,2k-1}^{-1}
\left(\PERMUTE_{(\ldots,2m-1,2m+1,\ldots)}\right)^{n}
\end{multline*}
finally, consecutive application of the pentagon relation  $N-n-2$ more times in the second line proves equation~\eqref{eq:power-any}.

Proof of equation~\eqref{eq:power-N} is now a relatively simple task:
\begin{multline*}
\zeta^{-6}\tilde\DEHN^{-1}=\zeta^{-6}\tilde\DEHN^{-N/N}=
\zeta^{-6}\tilde\DEHN^{(1-N)/N}\tilde\DEHN^{-1/N}
=\PTOLEMY_{3\check2}
\PTOLEMY_{13}^{-1}\underline{\PTOLEMY_{\hat12}\PTOLEMY_{2\check1}}
\PTOLEMY_{31}^{-1}\PTOLEMY_{\hat32}\\
=\PTOLEMY_{3\check2}\PTOLEMY_{13}^{-1}\PTOLEMY_{3\check2}^{-1}
\PTOLEMY_{13}\zeta\PERMUTE_{(\hat12\check1)}
=\PTOLEMY_{1\check2}^{-1}
\zeta\PERMUTE_{(\hat12\check1)}=\PTOLEMY_{\check1\hat2}.
\end{multline*}
\end{proof}

\subsubsection{Description in terms of variables of the discrete Liouville equation}
Define operators
\[
\GENI_{j,k}\equiv\sum_{l=j+1}^k\ff_{j}.
\]
Notice that the operator, corresponding to transformation~\eqref{eq:wn}, can be written in the form
\[
\SIM(n)=\INCL(\SIMF(n))\SIME(n),
\]
where
\[
\SIMF(n)=\prod_{N\ge j>n}\left(\QDILOG(\GEN_{2j})\QDILOGI(\GEN_{2j-1})
\QDILOGI(\GENI_{2j,2N+1})\right),\quad
\QDILOGI(x)\equiv (\QDILOG(x))^{-1},
\]
\[
\SIME(n)=\prod_{N\ge j>n}\left(e^{-\IMUN2\pi\POS_{2j}\POS_{2j-1}}
e^{\IMUN2\pi(\MOM_{2j-1}\MOM_{2j-2}+\MOM_{2j}\MOM_{1})}\right).
\]
Similarly, formula~\eqref{eq:tilde-n} can be written as
\[
\Ad(\SIM(n)^{-1})\DEHN^{-1/N}=\Ad(\SIME(n)^{-1})\INCL\left(
\Ad(\SIMF(n)^{-1})\LC\right).
\]
The following propositions can be proved in similar manner as propositions~\ref{prop:1} and \ref{prop:2}.
\begin{proposition}
Formula
\begin{multline*}
\Ad(\SIMF(n)^{-1})\LC={\tilde{\zeta}}^{n-N-1}
\prod_{j=2}^n\QDILOGI(\GEN_{2j-1})\prod_{N>k>n}
\QDILOG(\GENI_{2k-1,2N-1})\\
\times
\QDILOG(\GENI_{2n,2N-1})\QDILOGI(\GENI_{2N-1,2N+1})
\QDILOG(\GEN_{2n})e^{\IMUN\pi\sum_{l=n}^N\GEN_{2l}^2}\SHFL,\quad
\tilde\zeta\equiv\zeta e^{\IMUN\pi/6},
\end{multline*}
holds true for $1\le n<N$.
In particular,
\begin{multline*}
\LCT\equiv \Ad(\SIMF(1)^{-1})\LC={\tilde{\zeta}}^{-N}
\prod_{N>k>1}
\QDILOG(\GENI_{2k-1,2N-1})\\
\times
\QDILOG(\GENI_{2,2N-1})\QDILOGI(\GENI_{2N-1,2N+1})
\QDILOG(\GEN_{2})e^{\IMUN\pi\sum_{l=1}^N\GEN_{2l}^2}\SHFL.
\end{multline*}
\end{proposition}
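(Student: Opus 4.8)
The plan is to establish the formula by downward induction on $n$, in direct parallel with the proofs of Propositions~\ref{prop:1} and~\ref{prop:2}, but carried out inside the observable algebra rather than in the $\PTOLEMY$-realization. The starting point is the recursion obtained by peeling off the $j=n+1$ factor of $\SIMF$. Since the factors in $\SIMF(n)=\prod_{N\ge j>n}(\cdots)$ are ordered with the $j=n+1$ term rightmost (the same convention as for $\SIM(n)$ in~\eqref{eq:wn}), one has
\[
\SIMF(n)=\SIMF(n+1)\,\QDILOG(\ff_{2n+2})\QDILOGI(\ff_{2n+1})\QDILOGI(\GENI_{2n+2,2N+1}),
\]
and therefore
\[
\Ad(\SIMF(n)^{-1})\LC=\Ad\bigl(\QDILOG(\GENI_{2n+2,2N+1})\QDILOG(\ff_{2n+1})\QDILOGI(\ff_{2n+2})\bigr)\,\Ad(\SIMF(n+1)^{-1})\LC.
\]
Into the right factor I would substitute the induction hypothesis, i.e.\ the claimed formula with $n$ replaced by $n+1$. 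The asserted special case is then just the specialization to $n=1$, the product $\prod_{j=2}^{1}$ being empty.

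For the induction step I would conjugate the induction hypothesis by the three new quantum dilogarithms. The left factors $\QDILOG(\GENI_{2n+2,2N+1})\QDILOG(\ff_{2n+1})\QDILOGI(\ff_{2n+2})$ are absorbed into the head of the formula, while the right factors, being carried past the trailing $\SHFL$ through $\Ad(\SHFL)$, get their indices shifted by one via $\SHFL\ff_j=(-1)^j\ff_{j-1}\SHFL$; the accompanying signs are straightened by the inversion relation~\eqref{inversion}, which is the source of the extra Gaussian $e^{\IMUN\pi\ff_{2n}^2}$ that upgrades $\sum_{l=n+1}^N$ to $\sum_{l=n}^N$. The combinatorial rearrangement itself is effected by a sequence of five-term identities~\eqref{pent} --- each time checking via~\eqref{comrel} that the merged pair of arguments satisfies the Heisenberg relation~\eqref{heisen} --- followed by a single application of~\eqref{inversion}. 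A useful bookkeeping check is that $\tilde\zeta=\zeta e^{\IMUN\pi/6}=\zetai$ is exactly the scalar produced by~\eqref{inversion}, so the lone inversion per step accounts precisely for the drop of the exponent of $\tilde\zeta$ from $n-N$ to $n-N-1$, while the pentagon moves are scalar-free.

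The base case $n=N-1$ I would verify directly. Here $\SIMF(N-1)=\QDILOG(\ff_{2N})\QDILOGI(\ff_{2N-1})\QDILOGI(\ff_{1})$ (using $\GENI_{2N,2N+1}=\ff_{2N+1}=\ff_1$), a single factor, and the claim follows by conjugating $\LC=\SHFL\prod_{j=1}^N\QDILOGI(\ff_{2j})$ by it, again moving dilogarithms through $\SHFL$ and closing with~\eqref{inversion}. The main obstacle is not conceptual but the bookkeeping: choosing the correct order of pentagon moves so that every merge is legal under~\eqref{comrel}, and tracking all Gaussian and $\tilde\zeta$ factors so that the telescoping long sums $\GENI_{2k-1,2N-1}$ and the overall phase emerge exactly as stated. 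As an independent cross-check one may instead transport Proposition~\ref{prop:1} through the faithful map $\INCL$ and the relation $\Ad(\SIM(n)^{-1})\DEHN^{-1/N}=\Ad(\SIME(n)^{-1})\INCL(\Ad(\SIMF(n)^{-1})\LC)$, which must reproduce the same formula.
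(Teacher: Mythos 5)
Your proposal is correct and is essentially the proof the paper intends: the paper gives no separate argument here, saying only that these propositions can be proved in the same manner as Propositions~\ref{prop:1} and~\ref{prop:2}, and your downward induction from the base case $n=N-1$, with the peeling recursion $\SIMF(n)=\SIMF(n+1)\,\QDILOG(\ff_{2n+2})\QDILOGI(\ff_{2n+1})\QDILOGI(\GENI_{2n+2,2N+1})$ (the $j=n+1$ factor is indeed rightmost under the paper's descending product convention) and with the pentagon/inversion moves on $\PTOLEMY$-operators replaced by the five-term identity~\eqref{pent} and the inversion relation~\eqref{inversion}, is exactly that analogue. Your scalar bookkeeping also checks out: $\tilde\zeta=\zeta e^{\IMUN\pi/6}=\zetai$ is precisely the constant in~\eqref{inversion}, so the single net inversion per step correctly accounts both for the unit drop in the exponent of $\tilde\zeta$ and for the extra Gaussian $e^{\IMUN\pi\GEN_{2n}^2}$, mirroring the lone application of~\eqref{eq:16a} closing each induction step in the proof of Proposition~\ref{prop:1}.
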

\begin{proposition}
Formula
\begin{multline*}
\LCT^{N-n}=\tilde\zeta^{(n-N)N}
\prod_{n\ge j>1}\QDILOG(\GENI_{2j-1,2n+1})\QDILOG(\GENI_{2,2n+1})
\QDILOGI(\GENI_{2n+1,2N+1})\QDILOG(\GEN_2)\\
\times
\prod_{n<k<N}\QDILOGI(\GENI_{2k+1,2N+1})
\left( e^{\IMUN\pi\sum_{l=1}^N\GEN_{2l}^2}\SHFL\right)^{N-n}
\end{multline*}
holds true for $1\le n<N$, while
\[
\LCT^{N}={\tilde{\zeta}}^{1-N^2}
\QDILOGI(\GENI_{2,2N+1})e^{-\IMUN\pi\GEN_2^2}
\left( e^{\IMUN\pi\sum_{l=1}^N\GEN_{2l}^2}\SHFL\right)^{N}.
\]
\end{proposition}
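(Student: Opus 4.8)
The plan is to prove both displayed formulas by descending induction on $n$, in exact parallel with Proposition~\ref{prop:2}, the role previously played by the $\PTOLEMY$-operators and the relations~\eqref{eq:22},~\eqref{eq:20} now being taken by the factors $\QDILOG(\GENI_{\cdot,\cdot})$ and the functional equations of the non-compact quantum dilogarithm. Conceptually the two statements are the same computation transported through the faithful realization $\INCL$ and the conjugation by $\SIME(1)$, since the identities recorded just before the proposition give $\tilde\DEHN^{(n-N)/N}=\Ad(\SIME(1)^{-1})\INCL(\LCT^{N-n})$; but it is cleaner to carry out the induction intrinsically in the observable algebra. The organizing identity is the recursion $\LCT^{N-n}=\LCT\,\LCT^{N-n-1}$. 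For the base case I take $n=N-1$: then $\LCT^{N-n}=\LCT$, the prefactor $\tilde\zeta^{(n-N)N}$ collapses to $\tilde\zeta^{-N}$, the product $\prod_{n<k<N}$ is empty, and what remains is literally the ``in particular'' expression for $\LCT=\Ad(\SIMF(1)^{-1})\LC$ supplied by the preceding proposition, so no computation is needed.

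For the inductive step I assume the formula for $\LCT^{N-n-1}$, with $1<n+1<N$, and substitute it together with the base expression for the single left factor $\LCT$ into $\LCT^{N-n}=\LCT\,\LCT^{N-n-1}$. The trailing block $e^{\IMUN\pi\sum_{l=1}^N\GEN_{2l}^2}\SHFL$ of the left factor must then be carried to the right through the leading dilogarithms of $\LCT^{N-n-1}$. By functional calculus this conjugates each argument: $\SHFL$ sends $\ff_j\mapsto(-1)^j\ff_{j-1}$ by~\eqref{eq:shiftFlip}, thereby shifting the index ranges of the partial sums $\GENI_{j,k}$, while the Gaussian $e^{\IMUN\pi\sum\GEN_{2l}^2}$ acts as a linear (metaplectic) automorphism whose effect is fixed by the commutation relations~\eqref{comrel} and, on the arguments of the dilogarithms, by~\eqref{shift}. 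The point of the bookkeeping is that the combination of these two actions returns genuine partial sums again, and that the accumulated block assembles into $(e^{\IMUN\pi\sum_{l=1}^N\GEN_{2l}^2}\SHFL)^{N-n}$, matching the claimed tail.

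The heart of the step is then the same pattern of moves as in Proposition~\ref{prop:2}: repeated application of the operator pentagon identity~\eqref{pent} to create, transport, and merge dilogarithm factors (using $\GENI_{i,k}+\GENI_{k,m}=\GENI_{i,m}$ to build the longer sums appearing in $\prod_{n\ge j>1}\QDILOG(\GENI_{2j-1,2n+1})$), closed at the end by the inversion relation~\eqref{inversion} to cancel a surplus pair and absorb a Gaussian scalar. Each use of~\eqref{pent} must be licensed by checking that the two operators involved satisfy the Heisenberg relation~\eqref{heisen}; for the partial sums this follows by telescoping from~\eqref{comrel}, since adjacent sums $\GENI_{i,k}$ and $\GENI_{k,m}$ bracket to $(-1)^k(1+\delta_{\HNSITES,1})(2\pi\IMUN)^{-1}$, so that the requisite sign and the $\HNSITES=1$ doubling must be tracked (the wrong-sign pairs being handled by the inverse pentagon). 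Collecting the scalar output — the power of $\tilde\zeta$ together with the phases $e^{-\IMUN\pi z^2}\zetai$ emitted by~\eqref{inversion} and by the metaplectic conjugations — so that it totals the prefactor $\tilde\zeta^{(n-N)N}$ is the one genuinely delicate point; I would verify it by comparing, at each recursion, the increment in the $\tilde\zeta$-exponent against the single power contributed by the base factor $\LCT$.

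Finally, for $\LCT^{N}$ I use $\LCT^{N}=\LCT^{N-1}\,\LCT$, insert the $n=1$ instance of the main formula for $\LCT^{N-1}$ and the base expression for $\LCT$, and collapse the result by one further pentagon~\eqref{pent} followed by one inversion~\eqref{inversion}; this is the precise analogue of the short computation deriving~\eqref{eq:power-N} from~\eqref{eq:power-any} and~\eqref{eq:tilde-1} in Proposition~\ref{prop:2}, and it leaves the single surviving factor $\QDILOGI(\GENI_{2,2N+1})e^{-\IMUN\pi\GEN_2^2}$ times $(e^{\IMUN\pi\sum_{l=1}^N\GEN_{2l}^2}\SHFL)^{N}$ with prefactor $\tilde\zeta^{1-N^2}$. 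I expect the main obstacle throughout to be not any individual identity but the disciplined tracking of the index shifts induced by $\SHFL$ and of the central factors, exactly as in the $\PTOLEMY$-level computation of Propositions~\ref{prop:1} and~\ref{prop:2} that this argument mirrors.
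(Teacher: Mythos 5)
Your proposal is correct and takes essentially the same approach the paper intends: the paper gives no detailed proof, stating only that these propositions "can be proved in similar manner as propositions~\ref{prop:1} and \ref{prop:2}," and your descending induction — base case $n=N-1$ read off from the $\LCT$ formula of the preceding proposition, recursion $\LCT^{N-n}=\LCT\,\LCT^{N-n-1}$ mirroring $\tilde\DEHN^{(n-N)/N}=\tilde\DEHN^{-1/N}\tilde\DEHN^{(n+1-N)/N}$, closure via pentagon and inversion identities, and the final step $\LCT^{N}=\LCT^{N-1}\,\LCT$ paralleling the derivation of~\eqref{eq:power-N} — is precisely that argument transported to the dilogarithm level. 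Your attention to the Heisenberg normalization checks for each pentagon application and to the bookkeeping of the $\tilde\zeta$ powers correctly identifies the only delicate points in executing it.
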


\end{document}